\numberwithin{equation}{section}
\newtheorem{theorem}{Theorem}
\numberwithin{theorem}{section}
\newtheorem{proposition}[theorem]{Proposition}
\newtheorem{lemma}[theorem]{Lemma}
\newtheorem{corollary}[theorem]{Corollary}
\theoremstyle{definition}
\newtheorem{remark}[theorem]{Remark}
\DeclareMathOperator*{\argmin}{argmin}
\DeclareMathOperator{\diag}{diag}
\DeclareMathOperator{\rank}{rank}
\DeclareMathOperator{\prox}{prox}
\newcommand{\R}{\mathbb{R}}
\newcommand{\abs}[1]{{\left\lvert #1 \right\rvert}}
\newcommand{\matop}[1]{{\mathbf{#1}}}
\definecolor{svd}{HTML}{A02C2C}
\definecolor{als}{HTML}{1F77B4}
\definecolor{stiefel}{HTML}{FF7F0E}
\definecolor{riemann}{HTML}{2CA02C}
\date{}
\title{\textbf{Riemannian thresholding methods for row-sparse and low-rank matrix recovery}}
\author{Henrik Eisenmann\thanks{Max Planck Institute for Mathematics in the Sciences, 04103 Leipzig, Germany.\\Email: henrik.eisenmann@mis.mpg.de, uschmajew@mis.mpg.de} \quad Felix Krahmer\thanks{Department of Mathematics, Technische Universit\"at M\"unchen, 85748 Garching/Munich, Germany.\\Email:  felix.krahmer@tum.de} \and  Max Pfeffer\thanks{Faculty of Mathematics, Technische Universit\"at Chemnitz, 09107 Chemnitz, Germany.\\Email: max.pfeffer@math.tu-chemnitz.de} \quad Andr\'e Uschmajew${}^*$}
\begin{document}
\maketitle

\begin{abstract}
\noindent
In this paper, we present modifications of the iterative hard thresholding (IHT) method for recovery of jointly row-sparse and low-rank matrices. In particular a Riemannian version of IHT is considered which significantly reduces computational cost of the gradient projection in the case of rank-one measurement operators, which have concrete applications in blind deconvolution.
Experimental results are reported that show near-optimal recovery for Gaussian and rank-one  measurements, and that adaptive stepsizes give crucial improvement. A Riemannian proximal gradient method is derived for the special case of unknown sparsity. 
\end{abstract}

\noindent

\section{Introduction}
Since the seminal works on compressive sensing by Cand\`es, Romberg, and Tao \cite{candes2006compressed} and by Donoho \cite{donoho2006compressed}, the question of recovering structured signals from subsampled random measurements has received significant attention. Two structural models of fundamental importance in applications are sparse signals and low-rank matrices. A sparse signal is one that can be well approximated by a linear combination of just a few elements in a given basis or dictionary, and has proven to be appropriate for example in magnetic resonance imaging or remote sensing. Low-rank matrix models have been successful, e.g., for recommender systems and in applications related to phase retrieval and wireless communication. In these last two areas, the low-rank model arises from lifting, that is, a quadratic or bilinear measurement is equivalently expressed as a linear function acting on the rank-one matrix formed from the outer product of the two inputs. Consequently, combined with a sparsity assumption for the underlying signal (or signals), this entails that the matrix to be recovered is simultaneously of low rank and row and/or column sparse.

In this paper, we focus on the low rank and row sparse scenario. Such a model arises for example in wireless communication as follows. When an encoded message is transmitted via an unknown channel, the received signal can be modeled as the convolution of the encoded message vector with a channel vector. For this vector, sparsity can be assumed when only few transmission paths are active. The goal is then to estimate both the message and the sparse channel vector from the received signal. This problem of blind deconvolution can be  recast into a recovery problem for a row-sparse rank-one matrix from linear measurements (see section~\ref{sec: blind deconvolution}).
For a subspace model instead of a sparsity model (that is, when the active  transmission paths are assumed to be known), a number of recent works have discussed solution strategies, including lifting \cite{Ahmed2014, KS19} and nonconvex methods \cite{LLSW19}. Subsequently, these methods have been generalized to the more difficult case of multiple simultaneous transmissions \cite{ling2017blind, ling2017regularized, Jung2018}, but again only for subspace models.

To make our model precise,  
we consider the space $\R^{M \times N}$ of $M \times N$ matrices and denote by
\(
\| X \|_{0} 
\)
the number of nonzero rows of $X$. If $\| X \|_0 \le s$, we say that $X$ is \emph{row $s$-sparse}. The set of row $s$-sparse matrices is denoted by
\[
\mathcal N_s = \{ X \in \R^{M \times N} \colon \| X \|_0 \le s \}.
\]
The set of matrices of rank at most $k$ is denoted by
\[
\mathcal M_k = \{ X \in \R^{M \times N} \colon \rank(X) \le k \}.
\]
In this paper we focus on the intersection of these sets,
\[
\mathcal M_{k,s} = \mathcal M_k \cap \mathcal N_s = \{ X \in \R^{M \times N} \colon \rank(X) \le k, \ \| X \|_0 \le s \}.
\]
Throughout, we  assume that
\[
k<s,
\]
since otherwise the low-rank constraint is void. The problem we then consider is to recover a given matrix $X \in \mathcal M_{k,s}$ from $m$ linear measurements
\[
\langle A_p, X \rangle_F = y_p, \quad p = 1,\dots,m,
\]
where $A_1,\dots,A_m \in \mathbb R^{M \times N}$, and $\langle \cdot, \cdot \rangle_F$ is the usual Frobenius inner product. With a corresponding linear operator $\matop{A} : \R^{M \times N} \rightarrow \R^m$ this can be formulated as solving the problem
\begin{equation}\label{eq: problem}
	\matop A (X) = y, \quad X \in \mathcal M_{k,s}.
\end{equation}
for a given $y \in \R^m$.

The two simultaneous structural constraints defining $\mathcal M_{k,s}$ significantly reduce the degrees of freedom and allow for injectivity of $\matop A$ on $\mathcal M_{k,s}$ given a considerably smaller number of measurements $m$ as compared to constraining only on one of the two sets $\mathcal M_k$ or $\mathcal N_s$. Injectivity properties have been shown to require only $O(k(s+N))$ measurements in various scenarios \cite{EM14, LLB16}; for generic measurement operators, precise conditions on the number of measurements are known~\cite{kech2017optimal}.

At the same time, the simultaneous objectives make it harder to practically recover at near-minimal sampling complexity. In particular, while both the low-rank and the sparsity objective on its own admit tractable convex relaxations with recovery guarantees under random measurements, it has been shown that no linear combination of these two objectives allows for comparable guarantees for the joint objective \cite{Oymak2015}, see also~\cite{Klietsch2019}. Greedy-type methods are also difficult to generalize to the joint minimization problem. A typical key step in these methods is a projection onto the set of admissible signals. For sparsity and low-rank models, this projection can be efficiently implemented by restricting to the largest coefficients or the largest principal components, respectively. For the joint low-rank and (bi-)sparse model, however, this projection is an instance of the \emph{Sparse Principal Component Analysis} problem, which is known to be NP hard in general \cite{Magdon-Ismail2017}. 

For very special measurements, certain two-stage procedures can allow for guaranteed recovery. For phase retrieval, this works when  measurements of the form $|b_i^* \Phi x|^2$, i.e., $A_p= b_p^*\Phi\Phi^*b_p$ and $X=xx^*$ in terms of the representation \eqref{eq: problem}, are considered with $\Phi$ representing a linear dimension reduction, and the number of measurements is larger than the embedding dimension of $\Phi$ by at least a constant factor \cite{IVW17}. Namely, for $\Phi\in \R^{m\times N}, m\gtrsim s\log \tfrac{N}{s}$, 
 and $b_i$ both chosen with i.i.d.~Gaussian entries, such measurements allow the recovery of $y=\Phi x$ via standard phase retrieval techniques, from which one can then infer $x$ via compressive sensing. Similar nested measurements can also be constructed in the framework of bilinear problems~$\cite{BR16}$. While arguably such very special measurements cannot be assumed in many scenarios of interest, these observations show that solving sparse bilinear problems is not an intrinsically hard problem in all cases. 

That said, some recent progress has been made also for more generic classes of measurements. A number of works have established local recovery guarantees for a near-optimal number of measurements, that is, convergence to the true solution is guaranteed from all starting points in a suitable neighborhood.
For sparse phase retrieval, such guarantees have been established for gradient descent \cite{S19} and {\em Hard Thresholding Pursuit} \cite{CLLY20}. For unstructured Gaussian measurements, local guarantees are available for the alternating algorithms {\em Sparse Power Factorization} \cite{Lee2018} and {\em Alternating Tikhonov regularization and Lasso} \cite{FMN21}. Suitable initialization procedures to complement these methods by constructing a starting point in a small enough neighborhood of the solution, however, are known only for certain special classes of signals such as signals with few dominant entries \cite{Lee2018, GKS19}. In~\cite{Maly2021} a model of low-rank recovery with essentially sparse nonorthogonal factors is considered, for which a robust injectivity property for several types of measurements is established. We also mention the work~\cite{Haeffele2020} in which a rank-adaptive algorithm for finding global minima of nonconvex formulations of structured low-rank problems is presented.

Despite the recent progress, it remains largely an open problem whether and how joint (bi-)sparse and low rank signals can be efficiently recovered from a near-minimal number of measurements when no such initialization is provided. For an in-depth discussion of what makes the problem difficult and some initial ideas regarding how to solve it, we refer the reader to~\cite{Foucart2020}.

\subsubsection*{Contribution and outline} 

In this paper we consider a class of non-convex iterative methods based on modification of \emph{Iterative Hard Thresholding} (IHT) as proposed in the recent work~\cite{Foucart2020}. In principle, under suitable RIP assumptions for the operator $\matop A$, the standard IHT method could be used to approximate the solution of~\eqref{eq: problem} at an exponential rate. The main obstacle is that the exact projections on the set $\mathcal M_{k,s}$ are NP hard to compute as mentioned above. It is, however, possible to compute quasi-optimal projections on $\mathcal M_{k,s}$ by simply using the successive projections on $\mathcal M_k$ and $\mathcal N_s$, or vice versa. This approach is taken in section~\ref{sec: IHT approach} where we first derive quasi-optimality constants for such projections. These results complement some of the investigations in~\cite{Foucart2020} on the bisparse case.  We then consider a practical version of IHT that uses these quasi-optimal projections in combination with line search, and present a local convergence result for such a method.

The main contribution of this paper is a further  modification of the IHT algorithm that makes use of the manifold properties (of the smooth part) of the set $\mathcal M_k$ by applying a tangent space projection to the search direction. This idea is inspired by Riemannian low-rank optimization, which has been shown to be efficient in several applications, including matrix completion and matrix equations; see~\cite{bookch2020} for an overview. We demonstrate that in the important case of rank-one measurements, which includes problems of blind deconvolution, the additional tangent space projection allows for a significant reduction of computational cost since the projection of the gradient onto the tangent space can be efficiently realized even for large low-rank matrices. This observation does not specifically rely on the sparsity structure and should therefore be of interest for other low-rank  recovery problems with rank-one measurements as well. The proposed Riemannian version of IHT is presented in section~\ref{sec: Riemannian IHT}, with a detailed discussion for the case of rank-one measurements in section~\ref{sec: complexity}.

Lastly, we also consider the scenario that the sparsity parameter $s$ is unknown. One can then replace the hard-thresholding operator for the rows with a soft-thresholding operator. As we show in section~\ref{sec: Riemannian prox-gradient method}, such a modification admits a natural interpretation as a manifold proximal gradient method on $\mathcal M_k$ with the $(1,2)$-norm as a penalty. Notably, in contrast to other recent generalizations of the proximal gradient method to manifolds \cite{Chen2020,Huang2020}, the structural constraints considered in this work allow for a closed-form expression of the proximal step via soft-thresholding.

Finally, in section~\ref{sec: numerical experiments} we present several numerical experiments. We test the three algorithms proposed in this work in scenarios with random measurements as well as rank-one measurements. This also includes a numerical experiment on blind deconvolution with Fourier measurements.

The main outcome of our results is that in practice, and in the noiseless case, the proposed variants of IHT are capable of recovering row-sparse low-rank matrices with a near optimal number of measurements, up to a constant oversampling factor. The theoretical guarantees are currently restricted to local convergence results, and will be subject to future research.

\section{Review of iterative hard thresholding approaches}\label{sec: IHT approach}

The sparse low-rank recovery problem \eqref{eq: problem} can be recast into the optimization problem
\begin{equation}\label{eq: cost function IHT}
\min f(X) = \frac{1}{2} \| \matop A(X) - y \|_2^2 \quad \text{s.t. $X \in \mathcal M_{k,s}$,}
\end{equation}
where $\|\cdot\|_2$ denotes the Euclidean norm in $\R^m$. Noting that 
\[
\nabla f(X)= \matop A^*(\matop A (X) - y),
\]
an intuitive approach to the sparse low-rank recovery is the iterative hard thresholding method, which takes the form
\begin{equation}\label{eq: IHT general}
	X_{\ell+1} = \matop P_{\mathcal M_{k,s}}(X_\ell - \matop A^*(\matop A (X_\ell) - y)).
\end{equation}
where $\matop P_{\mathcal M_{k,s}}$ is a metric projection on $\mathcal M_{k,s}$, which is characterized by the best approximation property
\[
\| X - \matop P_{\mathcal M_{k,s}}(X) \|_F \le \| X - Y \|_F \quad \text{for all $Y \in \mathcal M_{k,s}$.} 
\]
By the standard arguments, one can show that under a suitable RIP assumption this method is globally convergent to the solution $X^*$.

 A main obstacle is that the projection on the set $\mathcal M_{k,s}$ is usually prohibitively expensive to compute, essentially at the cost of checking almost all possible subsets of $s$ rows of $X$. For a subset $S \subseteq \{ 1,\dots,M\}$ we denote by $X_S$ the projection of $X$ where all rows not in $S$ have been set to zero. We then have the following result, which has already been noted in~\cite{Foucart2020} for the case $k=1$.

\begin{proposition}
For given $X$, $\matop P_{\mathcal M_{k,s}}(X)$ is given as a best rank $k$ approximation of $X_{S'}$, where the submatrix $X_{S'}$ maximizes $\sigma_1^2(X_S) + \dots + \sigma_k^2(X_S)$ (sum of squares of largest singular values) among all submatrices $X_S$ of $X$ with $\abs{S} = s$.
\end{proposition}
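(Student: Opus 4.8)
The plan is to decompose the metric projection into an outer search over which rows to retain and an inner low-rank approximation on the retained rows. Every $Y \in \mathcal M_{k,s}$ has its nonzero rows contained in some index set $S \subseteq \{1,\dots,M\}$ with $\abs{S} \le s$ and satisfies $\rank(Y) \le k$. Grouping the squared objective $\|X - Y\|_F^2$ accordingly, I would introduce the value of the inner problem for a fixed support,
\[
m(S) = \min \{ \|X - Y\|_F^2 : \rank(Y) \le k, \ Y_i = 0 \text{ for } i \notin S \},
\]
so that $\|X - \matop P_{\mathcal M_{k,s}}(X)\|_F^2 = \min_{\abs{S} \le s} m(S)$. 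Since enlarging $S$ only enlarges the feasible set of the inner problem, $m$ is non-increasing under inclusion, and it therefore suffices to restrict the outer search to sets with $\abs{S} = s$.

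For a fixed support $S$, every admissible $Y$ vanishes on the rows outside $S$, exactly as $X_S$ does, so by orthogonality of the row blocks indexed by $S$ and its complement I obtain the Pythagorean splitting
\[
\|X - Y\|_F^2 = \|X - X_S\|_F^2 + \|X_S - Y\|_F^2 .
\]
The first term is independent of $Y$, so the inner problem reduces to finding the best rank-$k$ approximation of $X_S$ among matrices supported on $S$. The key observation—and the step I expect to be the crux—is that this support constraint is automatically satisfied by the unconstrained best rank-$k$ approximation: because $X_S$ has zero rows outside $S$, its range lies in $\Span\{e_i : i \in S\}$, so its left singular vectors, and hence the truncated SVD $X_S^{(k)}$, vanish outside $S$ as well. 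Consequently $X_S^{(k)} \in \mathcal M_{k,s}$ is feasible, and Eckart--Young identifies it as the minimizer with optimal value $\|X_S - X_S^{(k)}\|_F^2 = \sum_{j > k} \sigma_j^2(X_S)$.

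Combining the two displays and using $\|X - X_S\|_F^2 = \|X\|_F^2 - \|X_S\|_F^2$ together with $\|X_S\|_F^2 = \sum_j \sigma_j^2(X_S)$ yields
\[
m(S) = \|X\|_F^2 - \bigl( \sigma_1^2(X_S) + \dots + \sigma_k^2(X_S) \bigr).
\]
Minimizing $m(S)$ over all $\abs{S} = s$ is therefore equivalent to maximizing $\sigma_1^2(X_S) + \dots + \sigma_k^2(X_S)$, which selects the claimed set $S'$, and the projection is the best rank-$k$ approximation $X_{S'}^{(k)}$ of $X_{S'}$. The only points requiring care are the possible non-uniqueness of the SVD truncation and of the maximizer $S'$ when singular values coincide; but since the statement asserts existence (\emph{a} best rank $k$ approximation) rather than uniqueness, choosing any truncation supported on $S'$ closes the argument.
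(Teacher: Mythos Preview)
Your proof is correct and follows essentially the same approach as the paper: both fix a row support $S$, use the orthogonal splitting into the rows in $S$ and its complement, invoke Eckart--Young for the inner rank-$k$ problem, and reduce the outer optimization to maximizing $\sigma_1^2(X_S)+\dots+\sigma_k^2(X_S)$ over $\abs{S}=s$. Your write-up is somewhat more explicit (you justify why the truncated SVD inherits the row support and handle the $\abs{S}\le s$ versus $\abs{S}=s$ reduction via monotonicity of $m(S)$ up front, whereas the paper appeals to monotonicity of the singular-value sum at the end), but the argument is the same.
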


\begin{proof}
For any row support set $S$ with $\abs{S} \le s$, the optimal closest point in $\mathcal M_{k,s}$ with this support is obviously a best rank $k$ approximation $\matop T_k(X_S)$ of $X_S$ (which has the same row support). It has the squared distance
\begin{align*}
\| X - \matop T_k(X_S) \|_F^2 &= \| X_{\bar S} \|_F^2 + \| X_S - \matop T_k(X_S) \|_F^2 \\ &= \| X_{\bar S} \|_F^2 + \| X_S \|_F^2 -\| \matop T_k(X_S) \|_F^2 = \| X \|_F^2 - \| \matop T_k(X_S) \|_F^2,
\end{align*}
where $\bar S$ is the complement of $S$. This shows that the minimum is achieved when $ \| \matop T_k(X_S) \|_F^2=\sigma_1^2(X_S) + \dots + \sigma_k^2(X_S)$ is maximal among all $S$ with $\abs{S} \le s$. However, since this quantity does not decrease when adding rows to a matrix, it suffices to take the maximum over $\abs{S} = s$.
\end{proof}

By the above proposition, the projection $\matop P_{\mathcal M_{k,s}}$ is in principle available by computing the $k$ largest singular vectors of all possible submatrices with $s$ rows, which has combinatorial complexity. Even if a smaller set of candidates for the rows, say $2s$ of them, could be identified beforehand, the complexity remains exponential in $s$, not even counting the cost for computing the singular vectors. The computation of $\matop P_{\mathcal M_{k,s}}$ should therefore be in general infeasible, which also makes it infeasible to compute \eqref{eq: IHT general}.

\subsection{Quasi-optimal projections}

Feasible variants of IHT can be obtained by employing projections on~$\mathcal M_{k,s}$ that are only quasi-optimal, an  idea already suggested in~\cite{Foucart2020}. Such variants are derived from the fact that $\mathcal M_{k,s}$ is the intersection of the two cones $\mathcal N_s$ (row $s$-sparse matrices) and $\mathcal M_k$ (rank-$k$ matrices), and for both sets the metric projections are explicitly available. For $\mathcal N_s$ it is given as
\[
\matop H_s(X) = X_S
\]
where $S \subseteq \{1,\dots,M\}$ contains indices of $s$ rows of $X$ with largest norm. For~$\mathcal M_k$ the best rank-$k$ approximation of $X$ can be computed from the dominant singular vectors as usual and is denoted by $\matop T_k(X)$. Both $\matop H_s$ and $\matop T_k$ are nonlinear maps.  They are  possibly set-valued, in which case we assume  that some specific selection rule is applied. Since computing a best rank-$k$ approximation does not increase the row support of a matrix, the composition
\[
\matop P_{k,s}(X) := (\matop T_k \circ \matop H_s)(X)
\]
always maps into the cone $\mathcal M_{k,s}$. Similarly, projecting onto the largest $s$ rows does not increase the rank, hence the map
\[
{\hat{\matop P}}_{k,s}(X) := (\matop H_s  \circ \matop T_k)(X)
\]
also maps into $\mathcal M_{k,s}$. 

Computationally, $\matop P_{k,s}(X)$ is obtained from $X$ by first restricting to the submatrix consisting to the  $s$ rows of largest norm, and then computing a best rank-$k$ approximation of that submatrix. Since this submatrix has only $s$ rows this reduces the cost of the SVD. In contrast, ${\hat{\matop P}}_{k,s}(X)$ requires first a truncated SVD $U\Sigma V^T$ of $X$, which is in general more expensive. However, there is a potential scenario when ${\hat{\matop P}}_{k,s}(X)$ is applied to tangent vectors of the fixed rank-$k$ manifold, where this step is cheap. Also note that for finding the largest $s$ rows it is then sufficient to determine the largest rows of the matrix $U \Sigma$, which has only $k$ columns, so this step becomes slightly cheaper too.

The following proposition shows that both $\matop P_{k,s}$ and $\hat{\matop P}_{k,s}$ are quasi-optimal projections. This has already been shown in~\cite[Prop.~12]{Foucart2020}. We include a proof below, since the setting with only row sparsity that we consider in this paper allows to exploit a certain  commutativity relation that is not available in the general bisparse case and leads to an improved quasi-optimality constant compared to the result in~\cite{Foucart2020}.

\begin{proposition}\label{prop: quasioptimal}
For any $X \in \R^{M \times N}$ the projections $\matop P_{k,s}$ and $ \hat{\matop P}_{k,s}$ map into $  \mathcal M_{k,s}$ and are quasi-optimal in the sense that
\[
\| X - \matop P_{k,s}(X) \|_F \le \sqrt{2} \| X - \matop P_{\mathcal M_{k,s}}(X) \|_F, \qquad \| X - {\hat{\matop P}}_{k,s}(X) \|_F \le \sqrt{2} \| X - \matop P_{\mathcal M_{k,s}}(X) \|_F. 
\]
\end{proposition}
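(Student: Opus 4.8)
The plan is to prove both inequalities with the same two-step template: split the squared error of the quasi-optimal projection into two \emph{orthogonal} contributions (a Pythagorean identity), and then bound each contribution by $\| X - \matop P_{\mathcal M_{k,s}}(X) \|_F^2$, so that the constant $2 = (\sqrt 2)^2$ is literally the number of terms. Throughout I write $X^* = \matop P_{\mathcal M_{k,s}}(X)$, let $S^*$ be its ($\le s$-element) row support, and denote by $X_i$ the $i$th row of $X$. The two elementary facts I expect to reuse are: (i) since $X^*$ is row $s$-sparse, $\sum_{i \notin S^*}\| X_i \|^2 \le \| X - X^* \|_F^2$, because the rows outside $S^*$ are compared against zero; and (ii) restricting a matrix to a row subset, or multiplying it on the right by an orthogonal projector, does not increase its Frobenius norm.

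For $\matop P_{k,s} = \matop T_k \circ \matop H_s$, let $S$ be the support selected by $\matop H_s$, so that $\matop H_s(X) = X_S$ and $\matop P_{k,s}(X) = \matop T_k(X_S)$. Since $X - X_S$ is supported on the rows outside $S$ while $X_S - \matop T_k(X_S)$ is supported on $S$, the two are orthogonal and
\[
\| X - \matop P_{k,s}(X) \|_F^2 = \| X - X_S \|_F^2 + \| X_S - \matop T_k(X_S) \|_F^2 .
\]
The first term is the error of the best $\mathcal N_s$-approximation of $X$, hence at most $\| X - X^* \|_F^2$ because $X^* \in \mathcal N_s$. For the second term I would use the competitor $X^*_S$, which has rank $\le k$; optimality of $\matop T_k(X_S)$ then gives $\| X_S - \matop T_k(X_S) \|_F \le \| X_S - X^*_S \|_F \le \| X - X^* \|_F$. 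Adding the two bounds settles the $\matop P_{k,s}$ case.

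For $\hat{\matop P}_{k,s} = \matop H_s \circ \matop T_k$ the same template applies, but controlling the second term is where the real work lies, and this is exactly where the commutativity relation enters. Writing a truncated SVD $\matop T_k(X) = X V_k V_k\T$ with $V_k$ the top-$k$ right singular vectors, the key observation is that row selection (a left multiplication) commutes with the right multiplication by $V_k V_k\T$, so that $\hat{\matop P}_{k,s}(X) = \matop H_s(\matop T_k(X)) = X_T V_k V_k\T$, where $T$ indexes the $s$ rows $i$ of largest $\| X_i V_k \|$. Because the rows of $X - \matop T_k(X) = X(I - V_k V_k\T)$ lie in the orthogonal complement of $V_k$ while $\matop T_k(X) - \hat{\matop P}_{k,s}(X) = (X - X_T) V_k V_k\T$ keeps its rows in $V_k$, the two are orthogonal and I again obtain a clean split
\[
\| X - \hat{\matop P}_{k,s}(X) \|_F^2 = \| X - \matop T_k(X) \|_F^2 + \| \matop T_k(X) - \hat{\matop P}_{k,s}(X) \|_F^2 .
\]
The first term equals $\mathrm{dist}(X, \mathcal M_k)^2 \le \| X - X^* \|_F^2$ since $X^* \in \mathcal M_k$. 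For the second term the commutativity pays off: it equals $\sum_{i \notin T}\| X_i V_k \|^2$, and since $T$ picks the largest such rows it is at most $\sum_{i \notin S^*}\| X_i V_k \|^2 \le \sum_{i \notin S^*}\| X_i \|^2 \le \| X - X^* \|_F^2$ by facts (ii) and then (i).

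The step I expect to be the main obstacle is precisely this last term. Without the identity $\hat{\matop P}_{k,s}(X) = X_T V_k V_k\T$ one is forced to estimate $\mathrm{dist}(\matop T_k(X), \mathcal M_{k,s}) \le \| \matop T_k(X) - X^* \|_F \le 2 \| X - X^* \|_F$ by the triangle inequality, which only yields the weaker constant and reproduces the situation of the general bisparse setting. Reducing $\| \matop T_k(X) - \hat{\matop P}_{k,s}(X) \|_F^2$ to a comparison of the row norms $\| X_i V_k \|$ against $\| X_i \|$, via the commutativity of row selection with right projection, is what removes this loss and delivers the improved factor $\sqrt 2$.
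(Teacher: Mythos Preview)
Your argument is correct and follows essentially the same route as the paper's proof: the same Pythagorean splits, the same use of $\matop T_k(X)=XV_kV_k^\top$ and the commutativity of row selection with right projection for $\hat{\matop P}_{k,s}$, and the same bounding of each piece by $\mathrm{dist}(X,\mathcal M_k)^2$ or $\mathrm{dist}(X,\mathcal N_s)^2$. The only organisational difference is that the paper first introduces the single intermediate point $DXVV^\top$ (with $D$ selecting the top-$s$ rows of $X$) and shows both $\matop P_{k,s}(X)$ and $\hat{\matop P}_{k,s}(X)$ are at least as close to $X$ as this intermediate, whereas you bound the two terms directly using competitors derived from $X^*$ (namely $X^*_S$ and the support $S^*$); both choices work and yield the identical constant.
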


\begin{proof}
We first observe that both nonlinear mappings $\matop H_s$ and $\matop T_k$ for every input $X$ in fact act as linear orthogonal projections in the space $\R^{M \times N}$. Indeed, for given $X$, we can write
\[
\matop H_s(X) = D_{X} X, \quad \text{and} \quad \matop T_k(X) = X V_X^{} V_X^\top, 
\]
where $D_{X}$ is a binary diagonal matrix that selects $s$ rows supporting the $s$ largest (in norm) rows of $X$, and $V_X$ consists of the leading $k$ right singular vectors of $X$. In the rest of the proof we write $D$ instead of $D_{X}$ and $V$ instead of $V_X$. We first consider the map
\(
X \mapsto D X V V^\top
\)
and show that it provides an (alternative) quasi-optimal projection. Note that
\begin{align*}
\| X - D X VV^\top \|_F^2  &= \| X(I - VV^\top)\|_F^2 + \| (I - D)X V V^\top \|_F^2 \\ &\le \| X(I - VV^\top)\|_F^2 + \| (I - D)X \|_F^2.
\end{align*}
Since $\mathcal M_{k,s} \subseteq \mathcal M_k$, we have
\begin{equation*}\label{eq: first est}
\| X - X VV^\top \|_F= \min_{Y \in \mathcal M_k} \| X - Y \|_F \le \min_{Y \in \mathcal M_{k,s}} \| X - Y \|_F \le \| X - \matop P_{\mathcal M_{k,s}} (X) \|_F.
\end{equation*}
By an analogous argument, since $\mathcal M_{k,s} \subseteq \mathcal N_s$, we also have  $\| X - DX \|_F \le \| X - \matop P_{\mathcal M_{k,s}} (X) \|_F$. Therefore we obtain
\[
\| X - D X VV^\top \|_F \le \sqrt{2} \| X- \matop P_{\mathcal M_{k,s}} (X) \|_F.
\]
To conclude the proof, it remains to show that
\[
\| X - \matop P_{k,s}(X) \|_F \le \| X - D X VV^\top \|_F\quad\text{and} \quad\| X - \hat{\matop P}_{k,s}(X) \|_F \le \| X - D X VV^\top \|_F.
\]

Since $\matop P_{k,s}(X) = \matop T_k(DX)$ is supported in the same rows as $DX$, we have the orthogonal decomposition
\begin{align*}
\| X - \matop P_{k,s}(X) \|_F^2 &= \| X - DX  \|_F^2 + \|DX - \matop T_k(DX) \|_F^2.
\end{align*}
The second term on the right can be estimated as
\[
\|DX -  \matop T_k(DX) \|_F^2 \le \|DX - DX VV^\top \|_F^2
\]
since $DX VV^\top$ is a rank-$k$ matrix.  
It thus follows that
\[
\| X - \matop P_{k,s}(X) \|_F^2 \le \| X - DX  \|_F^2 + \|DX - DX VV^\top \|_F^2 = \| X - DX VV^\top \|_F^2.
\]
Similarily, since $\hat{\matop P}_{k,s} (X)= \matop H_s(X VV^\top)$, we have that
\[
\| X - \hat{\matop P}_{k,s}(X) \|_F^2 = \| X - XVV^\top  \|_F^2 + \|XVV^\top - \matop H_s(X VV^\top) \|_F^2.
\]
The second term on the right is not larger than $\|XVV^\top - D X VV^\top \|_F^2$, which likewise shows
\[
\| X -\hat{\matop P}_{k,s}(X) \|_F \le \| X - D X VV^\top \|_F,
\]
as desired.
\end{proof}

\begin{remark}
It is is interesting to note that for $\matop P_{k,s}$ the constant $\sqrt{2}$ is not attained for matrices $X$ where $\matop H_s(X)$ is single valued. If it were attained, then the proof shows that we have $\| X - DX \|_F = \| X - \matop P_{\mathcal M_{k,s}} (X) \|_F$, that is $\min_{Y \in \mathcal N_{s}} \| X - Y \| _F= \min_{Y \in \mathcal M_{k,s}} \| X - Y \|_F$. Then, however, $\matop P_{k,s}(X) = \matop H_s(X)$ is an optimal projection. Similarly, the constant $\sqrt{2}$ is not attained for $\hat{\matop P}_{k,s}$ when $\matop T_k$ is single valued.
\end{remark}

\subsection{IHT with adaptive stepsize}

Using the quasi-optimal projector $\matop P_{k,s}$, one obtains a modified version of IHT shown in Algorithm~\ref{alg: IHT quasi}, in which we additionally include a step size control. In principle, one could use the projector $\hat{\matop P}_{k,s}$ instead, but as noted above it should usually be more expensive to compute unless further structure can be exploited. In principle any starting point $X_0 \in \mathcal M_{k,s}$ could be used, but we noted that in our experiments the proposed choice $X_0 = 0$ works well.

\begin{algorithm}
\SetKwInOut{Input}{Input}
\Input{Linear operator $\matop{A}$, measurements $y$,\\ starting point $X_0 = 0 \in \mathcal{M}_{k,s}$ }
\For{$\ell = 0,1,\dots$}
{
Compute
\[
X_{\ell + 1} = (\matop T_k \circ \matop H_s)(X_\ell - \alpha_\ell \matop{A}^*(\matop{A}(X_\ell) - y))).
\]
}
\caption{IHT with quasi-optimal projection}\label{alg: IHT quasi}
\end{algorithm}

Possible step sizes in Algorithm~\ref{alg: IHT quasi} are either
$\alpha_\ell = 1$
(as in classical IHT) or $
\alpha_\ell = \frac{ \| \matop{A}^*(\matop{A}(X_\ell) - y)) \|^2}{\| \matop{A}( \matop{A}^*(\matop{A}(X_\ell) - y))) \|^2 },
$
which yields the optimal step size without projection. In our experiments, this did however not significantly improve the success or speed of convergence. Instead, we found that an adaptive line search works well. We implemented an Armijo backtracking where $\alpha_\ell = \beta^p$ and  $p$ is the smallest nonnegative integer that fulfills 
\[
f(X_\ell) - f\bigl((\matop T_k \circ \matop H_s)(X_\ell - \beta^p  \matop{A}^*(\matop{A}(X_\ell) - y)))\bigr) \geq \gamma \, \beta^p \, \| \matop{A}^*(\matop{A}(X_\ell) - y)\|^2
\]
for parameters $\beta \in (0,1), \gamma > 0$. We choose $\beta = 0.5$ and $\gamma = 10^{-4}$ in our experiments. Note that the projection is included in the Armijo condition, but the search direction is not guaranteed to be  a descent direction for $f\circ\matop T_k \circ \matop H_s$. In cases where the Armijo condition cannot be fulfilled, we resort to the regular stepsize rule $\alpha_\ell = 1$.

The most costly steps in Algorithm~\ref{alg: IHT quasi} are the computation $\matop A^*(\matop A(X)-y)$ and the quasi-optimal projections. We consider this in more detail in section~\ref{sec: complexity}.

\subsection{Convergence}

To our knowledge, for general measurements no global convergence result is currently available for Algorithm~\ref{alg: IHT quasi}, nor for any other algorithm in a near-minimal parameter regime. However, in the noiseless case, and with constant step size $\alpha_\ell = 1$, it is easy to state a qualitative \emph{local} convergence result under a RIP assumption. We say that $\matop A$ satisfies a $\delta_{k,s}$-RIP on $\mathcal{M}_{k,s}$ if 
\[
(1 -\delta_{k,s}) \|X\|_F^2 \le \| \matop{A} (X)\|^2 \le (1+\delta_{k,s}) \|X\|_F^2 \quad \text{for all $X \in \mathcal M_{k,s}$.}
\]
One can show that Gaussian measurements will satisfy a $\delta_{k,s}$-RIP with high probability if the number of measurements is at least of order $\delta^{-2}_{k,s}k(s + N) \ln(MN)$, cf.~\cite[Theorem 2]{Lee2018}. 

The local convergence proof is based on the simple observation that in a sufficiently small neighbourhood of a matrix with $s$ nonzero rows the quasi optimal projection $\matop P_{k,s}= \matop T_k \circ \matop H_s$ is indeed the optimal projection on $\mathcal M_{k,s}$, since the correct rows are selected.

\begin{lemma}\label{lemma: localoptimal proj}
Let $X^*\in \mathcal{M}_{k,s}$ have exactly $s$ nonzero rows  and let $\mu$ be the smallest norm among the nonzero rows of $X^*$. If $Y\in\R^{M\times N}$ satisfies $\|Y-X^*\|_F < \frac{\mu}{2}$, then $(\matop T_k\circ \matop H_s)(Y)=\matop P_{\mathcal{M}_{k,s}}(Y)$.
\end{lemma}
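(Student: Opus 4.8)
The goal is to show that when $Y$ is within distance $\mu/2$ of a matrix $X^*$ having exactly $s$ nonzero rows, the quasi-optimal projection $\matop T_k \circ \matop H_s$ coincides with the true metric projection $\matop P_{\mathcal M_{k,s}}$. The key insight, as the excerpt already hints, is that the only way the quasi-optimal projection can fail to be optimal is by selecting the \emph{wrong} set of $s$ rows; once the correct support is fixed, taking the best rank-$k$ approximation of that submatrix is exactly optimal by the first Proposition. So the entire proof reduces to showing that both $\matop H_s(Y)$ and the optimal projection must select the support $S^*$ of $X^*$.

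The plan is as follows. First I would let $S^*$ denote the $s$-row support of $X^*$ and argue that $\matop H_s(Y)$ selects exactly $S^*$. For any row index $i \in S^*$, the $i$-th row of $Y$ has norm at least $\mu - \|Y - X^*\|_F > \mu - \mu/2 = \mu/2$, using the reverse triangle inequality on rows together with the fact that the $i$-th row of $X^*$ has norm at least $\mu$. Conversely, for any row index $j \notin S^*$, the $j$-th row of $X^*$ is zero, so the $j$-th row of $Y$ has norm at most $\|Y - X^*\|_F < \mu/2$. Thus every row in $S^*$ strictly dominates every row outside $S^*$ in norm, which forces $\matop H_s$ to select precisely $S^*$, giving $\matop H_s(Y) = Y_{S^*}$.

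Second, I would show that the true projection $\matop P_{\mathcal M_{k,s}}(Y)$ is also supported on $S^*$. By the first Proposition, $\matop P_{\mathcal M_{k,s}}(Y)$ is a best rank-$k$ approximation of $Y_S$ for the support $S$ maximizing $\sigma_1^2(Y_S) + \dots + \sigma_k^2(Y_S)$; equivalently, from the distance computation in that proof, the optimal $S$ minimizes $\|Y_{\bar S}\|_F^2$, i.e.\ maximizes $\|Y_S\|_F^2$ — but that is exactly the criterion defining $\matop H_s(Y)$. Hence the optimal support coincides with $S^*$, and $\matop P_{\mathcal M_{k,s}}(Y) = \matop T_k(Y_{S^*}) = (\matop T_k \circ \matop H_s)(Y)$, as claimed. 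I should double-check here whether the row-norm criterion used by $\matop H_s$ (maximizing $\|Y_S\|_F^2$) truly matches the singular-value criterion of the Proposition: the distance identity $\|Y - \matop T_k(Y_S)\|_F^2 = \|Y\|_F^2 - (\sigma_1^2(Y_S) + \dots + \sigma_k^2(Y_S)) - \|Y_{\bar S}\|_F^2$ shows that the sum of top-$k$ squared singular values, not the full Frobenius norm of $Y_S$, is what must be maximized, so the two criteria are \emph{not} identical in general.

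This last point is where I expect the main obstacle to lie. Showing $\matop H_s$ picks $S^*$ is routine, but proving that the rank-aware optimal support also equals $S^*$ requires comparing $\sigma_1^2(Y_{S^*}) + \dots + \sigma_k^2(Y_{S^*})$ against the same quantity for competing supports. The clean way around this is to argue directly: since $\matop T_k \circ \matop H_s$ produces a genuine element of $\mathcal M_{k,s}$, and I will have established (via the orthogonal decomposition $\|Y - \matop T_k(Y_{S^*})\|_F^2 = \|Y_{\bar{S^*}}\|_F^2 + \|Y_{S^*} - \matop T_k(Y_{S^*})\|_F^2$) a bound on its distance to $Y$, it suffices to show no competing support $S \neq S^*$ can do better. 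For this I would exploit that any competing support must drop at least one dominant row (norm $> \mu/2$) in favor of a row of norm $< \mu/2$, so the penalty term $\|Y_{\bar S}\|_F^2$ strictly increases, while the best rank-$k$ truncation on either support recovers at most the full energy of that submatrix; quantifying that the gain in captured singular-value energy cannot compensate the lost row energy closes the gap. This separation-of-scales argument, driven entirely by the $\mu/2$ threshold, is the heart of the lemma.
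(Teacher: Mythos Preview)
Your plan is correct in structure and the first part (showing $\matop H_s(Y)=Y_{S^*}$ via the row-norm separation) is fine and matches the paper. The second part is where your proposal becomes vaguer than necessary: you correctly notice that the criterion from the Proposition is the top-$k$ singular-value energy rather than $\|Y_S\|_F^2$, and you set up the right orthogonal decomposition, but the sentence ``quantifying that the gain in captured singular-value energy cannot compensate the lost row energy closes the gap'' is not yet a proof. What is missing is a concrete upper bound on $\|Y-\matop T_k(Y_{S^*})\|_F$.

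The paper closes this gap with one clean inequality that you do not write down: since $X^*\in\mathcal M_{k,s}$ has support $S^*$, it is itself a competitor, so
\[
\|Y-(\matop T_k\circ\matop H_s)(Y)\|_F \;=\; \|Y-\matop T_k(Y_{S^*})\|_F \;\le\; \|Y-X^*\|_F \;<\; \tfrac{\mu}{2}.
\]
On the other hand, for \emph{any} $Z\in\mathcal M_{k,s}$ whose row support is not contained in $S^*$, there is a row index $i\in S^*$ outside the support of $Z$, and the $i$-th row of $Y$ alone gives $\|Y-Z\|_F\ge\|Y(i,:)\|>\mu/2$. Hence no such $Z$ can be the metric projection, and $\matop P_{\mathcal M_{k,s}}(Y)=\matop T_k(Y_{S^*})$. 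This bypasses any comparison of singular-value sums entirely; your ``separation-of-scales'' idea is exactly this, but the decisive step is the direct comparison with $X^*$, which you should make explicit.
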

\begin{proof}
Let $S$ be the row support of $X^*$. Obviously the largest $s$ rows of $Y$ are supported in $S$ and $(\matop T_k\circ \matop H_s)(Y)$ provides the best approximation with respect to this row support. We therefore need to show that the best approximation $\matop P_{\mathcal{M}_{k,s}}(Y)$ also has this row support. Indeed, let $Z$ be any matrix with a different support of size at most $s$ and let $y$ be any row of $Y$ not in the row support of $Z$ (but supported in $S$). Then $\|Y-Z\|_F\geq \|y\|\geq \frac{\mu}{2} >  \|Y-X^*\|_F \geq (\matop T_k \circ \matop H_s) (Y)$. This implies that $\matop P_{\mathcal{M}_{k,s}}(Y)$ needs to be supported in $S$.
\end{proof}

\begin{corollary}\label{cor: local convergence IHT}
Let $\delta_{3k,3s}<0.5$ be the RIP constant of $\matop{A}$ for the set $\mathcal{M}_{3k,3s}$. Let $X^*$ be the (unique) solution of~\eqref{eq: problem} with exactly $s$ nonzero rows, and assume $\|X_0-X^*\|_F\leq \frac{\mu}{2\|\matop{I}-\matop{A}^*\matop{A}\|_{}}$, where $\mu$ is the smallest norm among the nonzero rows of $X^*$. Then the sequence generated by Algorithm~\ref{alg: IHT quasi} with a fixed step-size $\alpha_\ell = 1$ 
satisfies
\[
\|X_{\ell+1}-X^*\|_F \leq 2\delta_{3k,3s} \|X_{\ell}-X^*\|_F.
\]
\end{corollary}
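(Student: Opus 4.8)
The plan is to combine the reduction-to-optimal-projection afforded by Lemma~\ref{lemma: localoptimal proj} with the classical restricted-isometry analysis of IHT. Throughout I write $g_\ell := X_\ell - \matop{A}^*(\matop{A}(X_\ell) - y)$ for the gradient step, so that the iteration with $\alpha_\ell = 1$ reads $X_{\ell+1} = (\matop{T}_k \circ \matop{H}_s)(g_\ell)$. Since $X^*$ solves~\eqref{eq: problem} we have $y = \matop{A}(X^*)$, which yields the identity
\[
g_\ell - X^* = (X_\ell - X^*) - \matop{A}^*\matop{A}(X_\ell - X^*) = (\matop{I} - \matop{A}^*\matop{A})(X_\ell - X^*),
\]
to be used repeatedly. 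In particular $\|g_\ell - X^*\|_F \le \|\matop{I} - \matop{A}^*\matop{A}\|\,\|X_\ell - X^*\|_F$.

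First I would run an induction showing that the neighbourhood hypothesis $\|X_\ell - X^*\|_F \le \mu/(2\|\matop{I}-\matop{A}^*\matop{A}\|)$ is preserved, so that the quasi-optimal projection coincides with the exact one at every step. The base case is the assumption on $X_0$. Assuming the bound at step $\ell$, the estimate above gives $\|g_\ell - X^*\|_F \le \mu/2$, so Lemma~\ref{lemma: localoptimal proj} applies and $X_{\ell+1} = (\matop{T}_k \circ \matop{H}_s)(g_\ell) = \matop{P}_{\mathcal{M}_{k,s}}(g_\ell)$. Granting the contraction proved next, $\|X_{\ell+1} - X^*\|_F \le 2\delta_{3k,3s}\|X_\ell - X^*\|_F \le \|X_\ell - X^*\|_F$ since $\delta_{3k,3s} < 0.5$, which reproduces the hypothesis at step $\ell+1$ and closes the induction.

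The core step is the contraction estimate for the exact projection. Since $X_{\ell+1} = \matop{P}_{\mathcal{M}_{k,s}}(g_\ell)$ is the best approximation of $g_\ell$ in $\mathcal{M}_{k,s}$ and $X^* \in \mathcal{M}_{k,s}$, I expand $\|g_\ell - X_{\ell+1}\|_F^2 \le \|g_\ell - X^*\|_F^2$ around $X^*$ and substitute the identity for $g_\ell - X^*$ to reduce the claim to
\[
\|X_{\ell+1} - X^*\|_F^2 \le 2\,\langle (\matop{I} - \matop{A}^*\matop{A})(X_\ell - X^*),\, X_{\ell+1} - X^*\rangle .
\]
Writing $w = X_\ell - X^*$ and $u = X_{\ell+1} - X^*$, the key observation — and what turns a $\delta_{4k,4s}$ bound into $\delta_{3k,3s}$ — is that $u$ and $w$ both lie in the single subspace $\mathcal{T}$ of matrices whose row support sits inside $\mathrm{rowsupp}(X^*)\cup\mathrm{rowsupp}(X_\ell)\cup\mathrm{rowsupp}(X_{\ell+1})$ (at most $3s$ rows) and whose column space sits inside the span of the column spaces of $X^*, X_\ell, X_{\ell+1}$ (dimension at most $3k$); hence $\mathcal{T} \subseteq \mathcal{M}_{3k,3s}$. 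Because the RIP gives $|\langle z, (\matop{I} - \matop{A}^*\matop{A})z\rangle| \le \delta_{3k,3s}\|z\|_F^2$ for every $z \in \mathcal{T}$, the self-adjoint operator $P_{\mathcal{T}}(\matop{I} - \matop{A}^*\matop{A})P_{\mathcal{T}}$ has norm at most $\delta_{3k,3s}$ on $\mathcal{T}$, so $|\langle (\matop{I} - \matop{A}^*\matop{A})w, u\rangle| \le \delta_{3k,3s}\|w\|_F\|u\|_F$. Dividing the displayed inequality by $\|u\|_F$ (trivial if $u=0$) gives the asserted rate.

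The main obstacle I anticipate is not any single computation but the bookkeeping that makes the two ingredients interlock: one must check that the neighbourhood condition survives the iteration, so that Lemma~\ref{lemma: localoptimal proj} keeps applying, and simultaneously that the three supports present at a given step really do fit inside one common $\mathcal{M}_{3k,3s}$ subspace — it is precisely the inclusion of $\mathrm{rowsupp}(X_\ell)$ and the column space of $X_\ell$, alongside those of $X^*$ and $X_{\ell+1}$, that legitimises invoking $\delta_{3k,3s}$ rather than a weaker constant. A minor point to watch is the boundary case in Lemma~\ref{lemma: localoptimal proj}, which requires the strict inequality $\|g_\ell - X^*\|_F < \mu/2$; since the contraction is strict once $2\delta_{3k,3s} < 1$, this is only a concern at $\ell = 0$ and is absorbed by the hypothesis on $X_0$.
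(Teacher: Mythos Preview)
Your proof is correct and follows essentially the same approach as the paper: invoke Lemma~\ref{lemma: localoptimal proj} to replace the quasi-optimal projection by the exact one, then run the standard IHT contraction argument on a linear subspace $\mathcal{T}\subseteq\mathcal{M}_{3k,3s}$ where the RIP controls $\|\matop{I}-\matop{A}^*\matop{A}\|$. The paper is simply terser, delegating the details of the contraction step to~\cite[Thm.~6.15]{RauhutBook}, whereas you write out explicitly the best-approximation inequality, the identification of $\mathcal{T}$, and the passage from the quadratic-form RIP bound to the bilinear estimate; these are exactly the ingredients of that reference.
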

 \begin{proof}
The proof is adapted from~\cite[Thm.~6.15]{FoucartBook}. Let $V$ be a linear subset of $\mathcal M_{k,s} + \mathcal M_{k,s} +\mathcal  M_{k,s}\subset \mathcal M_{3k,3s}$.
The RIP implies the spectral bounds
\begin{equation*}
-\delta_{3k,3s} \|X\|_F^2\leq\langle X , \matop A^*(\matop A(X))-X\rangle_F=\langle X ,(\matop A_{V}^*\matop A_{V}-\matop I_{V})(X)\rangle_F\leq\delta_{3k,3s} \|X\|_F^2
\end{equation*}
for all $X \in V$. Hence, 
the restricted operator $\matop A_{V}$ satisfies the estimate
\begin{equation*}
\|(\matop A_{V})^*\matop A_{V}-\matop I_{V}\|_{V\to V}\leq \delta_{3k,3s}
\end{equation*}
for the operator norm. This replaces the use of \cite[Lem.~6.16]{FoucartBook} in the proof of~\cite[Thm.~6.15]{FoucartBook}. 
Lemma~\ref{lemma: localoptimal proj} implies, that for  $\|X_\ell-X^*\|_F\leq \frac{\mu}{2\|\matop{I}-\matop{A}^*\matop{A}\|_{}}$ the quasi-optimal projection is indeed optimal. Hence, the proof technique of~\cite[Thm.~6.15]{FoucartBook} can be applied.
\end{proof}

The asymptotic rate of convergence however is faster than suggested by Corollary~\ref{cor: local convergence IHT}. To see this, let $\mathcal{M}_{k,S^*}$ denote the variety of matrices with rank at most $k$ and a fixed row support $S^*\subseteq \{1,\dots,L\}$ with $\abs{S^*} = s$ such that  $X^*\in\mathcal{M}_{k,S^*}$. If $\rank(X^*) = \min(k,s)$, then $\mathcal{M}_{k,S^*}$ is a smooth manifold around $X^*$ and the 
asymptotic rate depends on a RIP constant of the tangent space of $T_{X^*}\mathcal{M}_{k,S^*}$.
 
\begin{proposition}\label{prop: asymptotic rate iht}
Let $X^*$  be a solution of~\eqref{eq: problem} with exactly $s$ nonzero rows and assume $\rank(X^*) = \min(k,s)$. Assume the spectral norm of $\matop I-\matop{A}^*\matop A$ on $T_{X^*}\mathcal{M}_{k,S^*}$ is $\delta < 1$.  There exists an $\epsilon>0$ such that if the  the sequence $(X_{\ell})$  generated by Algorithm~\ref{alg: IHT quasi} with stepsize $\alpha_\ell=1$ satisfies \(\|X_\ell-X^*\|_F\le \epsilon\) for some $\ell$, then $X_\ell$ converges to $X^*$ and 
$\limsup_{\ell\to\infty}\frac{\|X_{\ell+1}-X^*\|_F}{\|X_{\ell}-X^*\|_F}\leq \delta$.
\end{proposition}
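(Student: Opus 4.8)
The plan is to show that, once the iterates enter a sufficiently small neighbourhood of $X^*$, the algorithm reduces to the smooth fixed-rank iteration on the fixed-support manifold $\mathcal M_{k,S^*}$, and then to linearise that iteration at $X^*$. Writing $R_\ell = X_\ell - X^*$ and using $\matop A(X^*) = y$, the search point is $Y_\ell := X_\ell - \matop A^*(\matop A(X_\ell) - y) = X^* + (\matop I - \matop A^*\matop A)R_\ell$, so that $\|Y_\ell - X^*\|_F \le \|\matop I - \matop A^*\matop A\|\,\|R_\ell\|_F$. First I would invoke Lemma~\ref{lemma: localoptimal proj}: if $\epsilon \le \mu/(2\|\matop I - \matop A^*\matop A\|)$, then $\|R_\ell\|_F \le \epsilon$ forces $\|Y_\ell - X^*\|_F < \mu/2$, so $\matop H_s$ selects exactly the row support $S^*$ of $X^*$ and $X_{\ell+1} = (\matop T_k\circ\matop H_s)(Y_\ell) = \matop P_{\mathcal M_{k,S^*}}(Y_\ell)$, the metric projection of $Y_\ell$ onto $\mathcal M_{k,S^*}$. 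Because $\rank(X^*) = \min(k,s) = k$, the matrix $X^*$ is a smooth point of $\mathcal M_{k,S^*}$, and for $\epsilon$ small the truncation keeps $\sigma_k$ bounded away from $0$, so every subsequent iterate stays on the smooth part of $\mathcal M_{k,S^*}$.

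The second step is to differentiate the projection. On a smooth embedded manifold the metric projection is $C^1$ in a tubular neighbourhood of a base point, with derivative there equal to the orthogonal projection onto the tangent space. Applying this at $X^* = \matop P_{\mathcal M_{k,S^*}}(X^*)$ with $T := T_{X^*}\mathcal M_{k,S^*}$ and $P_T$ the orthogonal projection onto $T$, I would write $X_{\ell+1} - X^* = P_T(Y_\ell - X^*) + o(\|Y_\ell - X^*\|_F) = P_T(\matop I - \matop A^*\matop A)R_\ell + o(\|R_\ell\|_F)$.

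The crucial third step uses that the iterates lie on the manifold. Since $X_\ell \in \mathcal M_{k,S^*}$ and $X^* \in \mathcal M_{k,S^*}$, the displacement $R_\ell$ is tangential up to second order: writing $R_\ell = R_\ell^T + R_\ell^\perp$ with $R_\ell^T = P_T R_\ell$, the standard quadratic-tangency estimate for a smooth manifold gives $\|R_\ell^\perp\|_F = O(\|R_\ell\|_F^2)$. Splitting accordingly, the tangential input obeys $\|P_T(\matop I - \matop A^*\matop A)R_\ell^T\|_F \le \delta\|R_\ell^T\|_F \le \delta\|R_\ell\|_F$ by the hypothesis that $\matop I - \matop A^*\matop A$ has spectral norm $\delta$ on $T$, while the normal input contributes only $\|P_T(\matop I - \matop A^*\matop A)R_\ell^\perp\|_F = O(\|R_\ell\|_F^2)$. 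Absorbing the quadratic term and the $o(\|R_\ell\|_F)$ remainder into a single $o(\|R_\ell\|_F)$, this yields $\|X_{\ell+1} - X^*\|_F \le (\delta + o(1))\|R_\ell\|_F$ as $\|R_\ell\|_F \to 0$.

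Finally, I would close the convergence argument: shrinking $\epsilon$ (below the support-identification threshold and small enough that rank $k$ is preserved) so that $\delta + o(1) \le (1+\delta)/2 < 1$ throughout the $\epsilon$-ball, the factor is a genuine contraction, hence the iterates remain in the ball and $\|R_\ell\|_F \to 0$ geometrically; as $\|R_\ell\|_F \to 0$ the $o(1)$ term vanishes, giving $\limsup_{\ell\to\infty}\|R_{\ell+1}\|_F/\|R_\ell\|_F \le \delta$. I expect the main obstacle to be the interplay of the second and third steps: one must simultaneously use the $C^1$-differentiability of the metric projection onto the low-rank manifold (valid precisely because $\rank(X^*)=k$ makes $X^*$ a smooth point) and the quadratic tangency $\|R_\ell^\perp\|_F = O(\|R_\ell\|_F^2)$, since without confining the error to the tangent space one would only recover the full operator norm $\|\matop I - \matop A^*\matop A\|$ rather than the sharp tangent-space constant $\delta$. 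Keeping the bookkeeping of the $o(\|R_\ell\|_F)$ and $O(\|R_\ell\|_F^2)$ terms consistent is the delicate part.
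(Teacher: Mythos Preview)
Your proposal is correct and follows essentially the same route as the paper's proof: identify the support via Lemma~\ref{lemma: localoptimal proj}, linearise the metric projection onto $\mathcal M_{k,S^*}$ at the smooth point $X^*$ by the tangent-space projection $P_T$, and use that $X_\ell - X^*$ is tangential up to higher order to reduce the contraction constant from the full operator norm to $\delta$. The only cosmetic difference is that the paper packages the last step as inserting a second $P_T$ to obtain the sandwiched form $P_T(\matop I-\matop A^*\matop A)P_T$, whereas you split $R_\ell = R_\ell^T + R_\ell^\perp$ explicitly; these are the same computation.
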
 

Note that $T_{X^*}\mathcal{M}_{k,S^*} \subseteq \mathcal{M}_{2k,s}$ and hence $\delta \le \delta_{2k,s}$ as for linear spaces the RIP constant and the spectral norm of $\matop I-\matop{A}^*\matop{A}$ coincide. In fact, for Gaussian measurements, the embedding dimension needed to obtain a spectral norm bounded by $\delta$ with high probability does not require a logarithmic factor, which is why one can generally expect $\delta$ to be smaller than $\delta_{2k,s}$ by a square root $\log$ factor.

 \begin{proof}
 Lemma~\ref{lemma: localoptimal proj} implies that in proximity to the solution $X^*$ the quasi-optimal projection $\matop T_k \circ \matop H_s$ equals the best approximation $\matop P_{\mathcal M_{k,S^*}}$ in Frobenius norm onto the manifold ${\mathcal M_{k,S^*}}$.
For $X_\ell$ close enough to $X^*$ we then get
 \begin{align*}
 X_{\ell+1}-X^*&=(\matop T_k \circ \matop H_s)(X_\ell-\matop A^*\matop A(X_\ell -X^*))-X^*\\
 &=\matop P_{T_{X^*}\mathcal{M}_{k,S^*}}(X_\ell-\matop A^*\matop A(X_\ell -X^*))-X^*+o(\|(\matop I -\matop A^* \matop A)(X_\ell-X^*)\|_F)\\
 &=\matop P_{T_{X^*}\mathcal{M}_{k,S^*}}(X_\ell-\matop A^*\matop A(X_\ell -X^*))-X^*+o(\|X_\ell-X^*\|_F)
 \end{align*}
 by linearizing the projection $\matop P_{\mathcal M_{k,S^*}}$, see~\cite[Lemma 4]{Absil2012}. Next we exploit that $X^* \in T_{X^*}\mathcal{M}_{k,S^*}$ and $(\matop I-\matop P_{T_{X^*}\mathcal{M}_{k,S^*}})(X_\ell-X^*) = o(\|X_\ell-X^*\|_F)$ (see, e.g.~\cite[Lemma 4.1]{Wei2016}) to get
 \begin{align*}
 X_{\ell+1}-X^*&=\matop P_{T_{X^*}\mathcal{M}_{k,S^*}}(\matop I-\matop A^*\matop A )\matop P_{T_{X^*}\mathcal{M}_{k,S^*}}(X_\ell -X^*)+o(\|X_\ell-X^*\|_F).
 \end{align*}
 By assumption,  
$\|\matop P_{T_{X^*}\mathcal{M}_{k,S^*}}(\matop I-\matop A^*\matop A )\matop P_{T_{X^*}\mathcal{M}_{k,S^*}}\|_F=\delta<1$. This allows to prove to assertion by induction.
 \end{proof} 
 
\section{Riemannian optimization approaches}\label{sec: Riemannian approach}

It is possible to exploit the structure of the set $\mathcal M_{k,s}$ as an intersection of $\mathcal{M}_k$ and $\mathcal{N}_s$. Since the smooth part of the set $\mathcal{M}_k$ (matrices of rank equal to $k$) is a connected manifold, it is reasonable to replace the negative gradient $- \nabla f(X_\ell) = -\matop{A}^*(\matop{A}(X_\ell) - y)$ by a Riemannian gradient, 
 that is, by its projection on a tangent space. When using the Riemannian metric inherited from the embedding into Euclidean space, the Riemannian gradient is simply given as the orthogonal projection of the Euclidean gradient onto the tangent space of $\mathcal M_k$ at $X_\ell$. In this way we obtain modifications of IHT with tangential search directions.

\subsection{Riemannian IHT}\label{sec: Riemannian IHT}

Given the SVD $X_\ell=U^{}_\ell\Sigma^{}_\ell V_\ell^\top$, and assuming $\rank (X_\ell) = k$, the orthogonal projection onto the tangent space is the linear map
\begin{equation}\label{eq: tangent space projector}
\matop{P}_\ell (Z) = U^{}_\ell U_\ell^\top Z + ZV^{}_\ell V_\ell^\top  -U^{}_\ell U_\ell^\top ZV^{}_\ell V_\ell^\top,
\end{equation}
see, e.g.,~\cite{Vandereycken2013}. Note that if $k$ is small, then the computation of the projection requires multiplication by tall matrices only. For the cost function~\eqref{eq: cost function IHT}, the projected gradient is
\[
\matop P_\ell(\nabla f(X_\ell)) = \matop P_\ell (\matop{A}^*(\matop{A}(X_\ell) - y)).
\]
Replacing the gradient in IHT with this projected gradient results in the scheme displayed in Algorithm~\ref{alg: semi-riem IHT}.

\begin{algorithm}
\SetKwInOut{Input}{Input}
\Input{Linear operator $\matop{A}$, measurements $y$,\\ starting point $X_1= (\matop T_k \circ \matop H_s) (\alpha_0 \matop{A}^* y) \in \mathcal{M}_{k,s}$ with initial step size $\alpha_0 \in \R$ }
\For{$\ell = 1,2,\dots$}
{
Choose stepsize $\alpha_\ell > 0$\;
Compute
\[
X_{\ell + 1} = (\matop T_k \circ \matop H_s) (X_\ell - \alpha_\ell \matop{P}_\ell (\matop{A}^*(\matop{A}(X_\ell) - y))).
\]
}
\caption{Riemannian IHT with quasi-optimal projection}\label{alg: semi-riem IHT}
\end{algorithm}

\noindent
Possible step size rules are again constant steps $\alpha_\ell=1$ or an Armijo-like condition
\begin{equation}\label{eq:riemarm}
f(X_\ell) - f\bigl((\matop T_k \circ \matop H_s)(X_\ell - \beta^p \matop P_\ell( \matop{A}^*(\matop{A}(X_\ell) - y)))\bigr) \geq \gamma \, \beta^p \, \| \matop P_\ell( \matop{A}^*(\matop{A}(X_\ell) - y))\|_F^2.
\end{equation}
Without further structure, the tangent space projection has a cost of $O((M+N)k^2)$ flops.  An advantage of this approach is that application of the quasi-optimal projection $\matop T_k \circ \matop H_s$ then becomes somewhat cheaper. Indeed, since $\matop P_\ell(X_\ell) = X_\ell$, and since elements in the tangent space are of rank at most $2k$, a careful implementation of the tangent space projection (see~\cite{Vandereycken2013,bookch2020}) yields a decomposition
\[
X_\ell - \alpha_\ell \matop P_\ell( \matop{A}^*(\matop{A}(X_\ell) - y)) = \hat U K \hat V^\top 
\]
where $\hat U \in {\mathbb R}^{M \times 2k}$ and $\hat V \in {\mathbb R}^{N \times 2k}$ both have pairwise orthonormal columns. To apply $\matop H_s$ one hence needs to find the $s$ largest rows of $\hat U K$, which has complexity $O((s+k^2)M)$ (since $k \le s$), as opposed to $O((s+N)M)$ in Algorithm~\ref{alg: IHT quasi}. Since $\hat V$ is already orthogonal, the subsequent computation of a best rank-$k$ approximation requires only an SVD of the resulting $s \times 2k$ matrix (cost $O(k^2s)$), as opposed to an $s \times N$ matrix (cost $O(s^2 N)$ if $s \le N$). A comparison including the cost of forming $\matop{A}^*(\matop{A}(X_\ell) - y)$ is made in section~\ref{sec: complexity}.

\begin{remark}
Formally Algorithm~\ref{alg: semi-riem IHT} is well defined only as long as the iterates remain of full rank $k$. In the special case where $X_\ell$ has rank lower than $k$, we can slightly abuse the above notation and let $\mathbf P_\ell$ denote the projection on the tangent cone, which is given,~e.g., in~\cite{Schneider2016}. Since the tangent cone is symmetric in $0$, it indeed holds $- \matop P_\ell \nabla f(X) = \matop P_\ell (-\nabla f(X))$. In practice, the rank usually never drops and this issue can be ignored, except for the starting point in zero, which for this reason we have stated explicitly as $X_1= (\matop T_k \circ \matop H_s) (\alpha_0 \matop{A}^* y)$. Here, the initial step size is calculated using the Armijo-rule~\eqref{eq:riemarm} with $X_0 = 0$.
\end{remark}

\begin{remark}
\label{rem:order}
It would also make sense to use the quasi-projection $\matop H_s \circ \matop T_k$ instead of $\matop T_k \circ \matop H_s$, that is,
\[
X_{\ell+1} = (\matop H_s \circ \matop T_k)(X_\ell - \alpha_\ell \matop P_\ell(\nabla f(X_\ell)) ).
\]
This can be interpreted as Riemannian gradient method on $\mathcal M_k$ with retraction~$\matop T_k$, see~\cite{Absil2008,Absil2012}, but with additional thresholding by $\matop H_s$. Depending on $s$ and $k$, this order of projections can be implemented even more efficiently in many cases. On the other hand, our experiments have shown that the improvement is negligible unless $s \gg 2k$, in particular taking the more costly gradient computation into account. For the choice of initial point $X_1$, however, it appears to be {\em very} important to truncate the $M-s$ smallest rows of $\matop A^*(y)$ before the rank-$k$ truncation as in Algorithm~\ref{alg: semi-riem IHT}, and not the other way round, as this greatly improves the success rate. After that initialization, we did not observe a significant difference of the two orderings and therefore kept it consistent with Algorithm~\ref{alg: IHT quasi}.
\end{remark}

We now present a local convergence result for Algorithm~\ref{alg: semi-riem IHT} with constant stepsize $\alpha_\ell=1$ and under similar RIP conditions as for Algorithm~\ref{alg: IHT quasi}. Note that the statement is the same as in Proposition~\ref{prop: asymptotic rate iht}.

\begin{proposition}\label{prop:localConvRIHT}
Let $X^*$ be a solution of~\eqref{eq: problem}  with exactly $s$ nonzero rows and assume $\rank(X^*) = \min(k,s)$. Assume the spectral norm of $\matop I-\matop{A}^*\matop A$ on $T_{X^*}\mathcal{M}_{k,S^*}$ is $\delta < 1$.  There exists an $\epsilon>0$ such that if the  the sequence $(X_{\ell})$  generated by Algorithm~\ref{alg: semi-riem IHT} with stepsize $\alpha_\ell=1$ satisfies \(\|X_\ell-X^*\|_F\le \epsilon\) for some $\ell$, then $X_\ell$ converges to $X^*$ and 
$\limsup_{\ell\to\infty}\frac{\|X_{\ell+1}-X^*\|_F}{\|X_{\ell}-X^*\|_F}\leq \delta$.
\end{proposition}

\begin{proof}
The proof is similar to the one of Proposition~\ref{prop: asymptotic rate iht}. We first note that in a neighborhood of $X^*$ the projection $\matop H_s$ equals the projection $D_{S^*}$ onto the row support of $X^*$, which is a linear operator represented by a diagonal matrix. Next, we also linearize the projection $\matop T_k$ at the point $X^*$, and approximate the tangent space projection $\matop \matop P_{T_{X_\ell}\mathcal M_k}$ by $\matop P_{T_{X^*}\mathcal M_K}$ using $\|\matop P_{T_{X_\ell}\mathcal M_K}- \matop P_{T_{X^*}\mathcal M_K}\|\leq c\|X_\ell-X^*\|_F$ in spectral norm for some $c > 0$ (for instance $c = \frac{1}{\sigma_{k}(X^*)}$, see, e.g.,~\cite[Lemma~4.2]{Wei2016}). We get
\begin{align*}
X_{\ell+1}-X^*&= (\matop T_k\circ\matop H_s )(X_\ell - \matop P_{T_{X_\ell}\mathcal M_k}\matop A\matop A^*(X_\ell -X^*))-X^*\\
&= \matop P_{T_{X^*}\mathcal{M}_k}D_{S^*} (X_\ell -\matop P_{T_{X^*}\mathcal M_k}\matop A\matop A^*(X_\ell -X^*))-X^*+o(\|X_\ell-X^*\|_F)\\
&=\matop P_{T_{X^*}\mathcal{M}_k}D_{S^*} (\matop I-\matop P_{T_{X^*}\mathcal{M}_k} \matop A\matop A^*)(X_\ell -X^*)+o(\|X_\ell-X^*\|_F),
\end{align*}
where for the last equality we have used $X^* \in T_{X^*} \mathcal M_k$. Let now $X^*=U\Sigma V^\top$ be a singular value decomposition. Since $U$ has row support $S^*$, we have
\[
D_{S^*}(UU^\top Z+Z VV^\top-UU^\top ZVV^\top)=UU^\top D_{S^*} Z+ D_{S^*} ZVV^\top -UU^\top D_{S^*} Z VV^\top
\]
for any $Z$. Recalling the formula~\eqref{eq: tangent space projector}, this shows that the projections $D_{S^*}$ and $\matop P_{T_{X^*}\mathcal{M}_k}$ commute. Therefore $D_{S^*} \matop P_{T_{X^*}\mathcal{M}_k} = \matop P_{T_{X^*} \mathcal M_k} D_{S^*} =  \matop P_{T_{X^*}\mathcal M_{k,S^*}}$. As in the proof of Proposition~\ref{prop: asymptotic rate iht} we arrive at
\[
 X_{\ell+1}-X^*=\matop P_{T_{X^*}\mathcal{M}_{k,S^*}}(\matop I-\matop A^*\matop A )\matop P_{T_{X^*}\mathcal{M}_{k,S^*}}(X^* -X_\ell)+o(\|X_\ell-X^*\|_F),
\]
which for any $\varepsilon > 0$ can be bounded by $(\delta + \varepsilon)\|X_\ell-X^*\|_F$ for $X_\ell$ close enough to $X^*$. This implies the local convergence at the asserted asymptotic rate.
\end{proof}

\subsection{Improved numerical complexity for rank-one measurements}\label{sec: complexity}

In practice, Algorithms~\ref{alg: IHT quasi} and~\ref{alg: semi-riem IHT} often perform equally well. The main difference is that Algoritm~\ref{alg: semi-riem IHT} uses the projected gradient on the tangent space of (the smooth part of) $\mathcal M_{k}$. Thus a potential performance gain is tied to the question whether the low dimensionality of these tangent spaces can be exploited to achieve a  lower computational complexity. It turns out that this is the case in the important scenario of rank-one measurements, which occurs frequently in the literature.

The main bottleneck in both algorithms is forming $\matop A^*(\matop A(X)-y)$ or its projected version. Rank-one measurements take the form
\[
 \langle A_p, X \rangle_F = \langle a^{}_p b_p^\top, X \rangle_F = a_p^\top X b^{}_p, \quad p=1,\dots,m.
\]
In this case, forming $\matop{A}(X_\ell) - y$ for an $X\in\mathcal M_{k,s}$ that is already in the form $X^{}_\ell = U^{}_\ell \Sigma^{}_\ell V_\ell^\top$ with $\| U_\ell \|_0 \le s$ needs only $O(k(s+N)m)$ flops. In the Riemannian version, the application of the dual operator and projection to the tangent space can be combined in the following way:
\begin{equation}\label{eq: efficient implementation}
\matop P_\ell\matop A^* (z) = U_\ell^{}  \left( \sum_{p=1}^m z_p U_\ell^\top a_p^{}b_p^\top\right)+\left(\sum_{p=1}^m z_p a_p^{}b_p^\top V_\ell^{} \right) V_\ell^\top -  U_\ell^{}\left( \sum_{p=1}^m z_pU_\ell^\top a_p^{}b_p^\top V_\ell^{}\right) V_\ell^\top.
\end{equation}
The cost for this is $O(k(N+M)m)$ since $U_\ell$ and $V_\ell$ as well as the matrices in the sums are  $M\times k$ and $N\times k$ matrices.
Note that in a careful implementation, only the terms in the brackets need to be computed to represent the tangent vector. From this representation, it is possible to apply the projections $\matop H_s$ and $\matop T_k$ efficiently as mentioned above. 
 
\begin{table}[t]
\centering
\setlength{\tabcolsep}{10pt}
\renewcommand{\arraystretch}{1.5}
\begin{tabu}{ l | c }
\toprule

{\bf Operation} & {\bf Computational cost} \\ 

\midrule

Application of general $\matop A$ and $\matop A^*$ &  $O(mNM)$ \\
Application of rank-one $\matop A$ & $O(mk(N+s))$\\
Application of rank-one $\matop A^*$&  $O(mNM)$\\
Application of rank-one $\matop P_\ell \matop A^*$  &$O(mk(N+M))$\\
$\matop H_s$ of a full-rank matrix & $O((s+N)M)$\\
$\matop H_s$ of a rank-$k$ matrix & $O((s+k)M)$\\
SVD of a row-sparse matrix &$O(s^2N)$\\

\midrule

Overall cost of Alg.~\ref{alg: IHT quasi} and~\ref{alg: semi-riem IHT} with general $\matop A$ & $O(mNM)$\\
Overall cost of Alg.~\ref{alg: IHT quasi} with rank-one $\matop A$& $O(mNM)$\\
Overall cost of Alg.~\ref{alg: semi-riem IHT} with rank-one $\matop A$& $O(mk(N+M))$\\

\bottomrule
\end{tabu}
\caption{Complexities of operations in Algorithms~\ref{alg: IHT quasi} and~\ref{alg: semi-riem IHT}.}
\label{tab: complexity}
\end{table}

In the non-Riemannian version in Algorithm~\ref{alg: IHT quasi} the tangent space projection is not applied. For rank-one measurements, $\matop{A^*}(\matop A(X) - y)$ is a sum of $m$ rank-one matrices, but this does not help since usually $m \ge N$.
The cost  remains $O(mNM)$.

We conclude that in the case of rank-one measurements, if $k$ is much smaller than $N$, the Riemannian method should be computationally beneficial. This is confirmed by our numerical experiment in section~\ref{sec: blind deconvolution}. Table~\ref{tab: complexity} contains the complexities for the main steps in both algorithms. Note that unlike for Gaussian measurements, we usually cannot expect an RIP to hold for rank-one measurements and therefore even the local convergence result in Proposition~\ref{prop:localConvRIHT} might not be applicable. It would be interesting to study under which conditions the contractivity of $\matop I -\matop A^*\matop A$ on the tangent space $T_{X^*}\mathcal{M}_{k,S^*}$ as required in this proposition can be guaranteed for rank-one measurements, but we do not pursue this here.

\subsection{Soft-thresholding as a Riemannian proximal gradient method}\label{sec: Riemannian prox-gradient method}

In the following, we consider the case where the rank $k$ is known but the sparsity parameter $s$ is not. Our main application of blind deconvolution falls exactly into this category for the special case $k=1$. Both methods proposed above can be made adaptive with respect to $s$ by selecting in every step a threshold on the row norm to decide which rows to keep. A well established approach is soft thresholding. Here we show that such an approach can be interpreted as a Riemannian proximal gradient method on the manifold $\mathcal M_k$.  
We remark that soft thresholding could in principle also be applied to the rank if it is unknown but this case is not considered.

The method is derived as follows. For unknown $s$, to promote a row-sparse solution it is common to use the $(1,2)$-norm
\[
 g(X) = \| X \|_{1,2} = \sum_{i=1}^M \| X(i,:) \|_2
\]
as a convex penalty. Here, $X(i,:)$ denotes the $i$-th row of $X$. The task is then to minimize the function $f(x) + \mu g(x)$ with a penalty parameter $\mu > 0$. Note that $g$ is not differentiable in points $X$ having zero rows. Since both the function $f$ and $g$ are convex on $\R^{M \times N}$, an intuitive approach would be to consider methods like the proximal gradient descent as presented, e.g., in~\cite{Boyd2013,Beck2017}. These methods consist in applying the so called prox operator to $\mu g$ after the gradient step for $f$. However, prox operators are usually defined on convex domains. In our case, we consider a non-convex definition on the manifold $\mathcal M_k$ instead:
\begin{equation}\label{eq: prox on manifold}
 \prox_{\mu g}^{\mathcal M_k}(Y) \in \argmin_{X \in \mathcal M_k} \left( \mu g(X) + \frac{1}{2} \| X - Y \|_F^2 \right).
\end{equation}
For general $g$, we cannot evaluate such an operator easily, as it technically involves optimization of a local Lipschitz function on a manifold. However, as it turns out, for the particular choice of the $(1,2)$-norm, and for inputs $Y \in \mathcal M_k$, the prox operator simply coincides with the prox operator on the full space $\R^{M \times N}$, since the latter does not increase the rank. Its closed form solution is given via soft thresholding of rows. For completeness we provide a proof of this observation.

\begin{proposition}
\label{thm:softthresh}
For given $Y \in \R^{M \times N}$ and $\mu > 0$, the prox operator for the function $\mu g$ on $\R^{M \times N}$ has the closed form
\begin{equation}
\prox_{\mu g} (Y) \coloneqq \argmin_{X \in \R^{M \times N}} \left( \mu g(X) + \frac{1}{2} \| X - Y \|_F^2 \right) = \matop S_{1,2}^\mu(Y),
\end{equation}
where for each row $\mathbf y_i \in \R^N$ of $Y$, $\mathcal S_{1,2}^\mu(Y)$ is the soft thresholding operator
\begin{equation}\label{eq: ST operator}
\matop S_{1,2}^\mu(\mathbf y_i) \coloneqq 
\begin{cases}
\frac{\| \mathbf y_i \| - \mu}{\| \mathbf y_i \|} \; \mathbf y_i, \qquad & \text{if $\| \mathbf y_i \| > \mu$,} \\
0, \qquad &\text{otherwise.}
\end{cases}
\end{equation}
In particular, if $Y \in \mathcal M_k$, then also $\prox_{\mu g}^{\mathcal M_k}(Y) = \matop S_{1,2}^\mu(Y)$.
\end{proposition}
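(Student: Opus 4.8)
The plan is to establish the two assertions in sequence: first the closed form of the unconstrained prox operator, which is a standard group/block soft-thresholding computation, and then to deduce the manifold statement from it via a rank argument. The first key observation is separability. Both the penalty $g(X) = \sum_{i=1}^M \| X(i,:) \|_2$ and the quadratic term $\frac{1}{2}\| X - Y \|_F^2 = \frac{1}{2}\sum_{i=1}^M \| X(i,:) - Y(i,:) \|_2^2$ decompose as sums over rows, so the objective $\mu g(X) + \frac{1}{2}\| X - Y \|_F^2$ splits into $M$ independent problems, one per row $\mathbf y_i$ of $Y$:
\[
\min_{x \in \R^N} \; \mu \| x \|_2 + \tfrac{1}{2} \| x - \mathbf y_i \|_2^2 .
\]
Because the quadratic term is strictly convex and $\mu \| x \|_2$ is convex, each subproblem has a \emph{unique} minimizer; this is what justifies writing an equality rather than a set inclusion for $\prox_{\mu g}$.

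Next I would solve each row subproblem through its first-order optimality condition $0 \in \mu\, \partial \| x \|_2 + (x - \mathbf y_i)$. The only delicate point is that the Euclidean norm is nondifferentiable at the origin, so I would split into the two cases distinguished by the subdifferential, using $\partial \| x \|_2 = \{ x / \| x \|_2 \}$ for $x \neq 0$ and $\partial \| x \|_2 = \{ v : \| v \|_2 \le 1 \}$ for $x = 0$. In the case $x \neq 0$, stationarity forces $x$ to be a positive multiple of $\mathbf y_i$; taking norms gives $\| x \|_2 = \| \mathbf y_i \|_2 - \mu$, which is admissible exactly when $\| \mathbf y_i \|_2 > \mu$ and reproduces the scaling factor $(\| \mathbf y_i \| - \mu)/\| \mathbf y_i \|$. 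In the case $x = 0$, the condition reads $\mathbf y_i \in \mu\, \{ v : \| v \|_2 \le 1 \}$, i.e.\ $\| \mathbf y_i \|_2 \le \mu$. Together these two cases recover $\matop S_{1,2}^\mu$ row by row, proving the first formula.

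For the manifold statement I would exploit that row soft-thresholding merely rescales each row by a nonnegative scalar. Writing $d_i = \max\!\left(1 - \mu/\| \mathbf y_i \|_2,\, 0\right)$, one has $\matop S_{1,2}^\mu(Y) = D Y$ with $D = \diag(d_1,\dots,d_M)$ and $d_i \ge 0$. Since left multiplication by a diagonal matrix cannot increase rank, $\rank(\matop S_{1,2}^\mu(Y)) \le \rank(Y) \le k$ whenever $Y \in \mathcal M_k$; that is, the unconstrained minimizer already lies in the feasible set $\mathcal M_k$. As it minimizes $\mu g + \frac{1}{2}\| \cdot - Y \|_F^2$ over all of $\R^{M \times N} \supseteq \mathcal M_k$, it a fortiori minimizes over $\mathcal M_k$, and by the uniqueness noted above it is the unique such minimizer. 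Hence $\prox_{\mu g}^{\mathcal M_k}(Y) = \matop S_{1,2}^\mu(Y)$, which is exactly where the hypothesis $Y \in \mathcal M_k$ is used.

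The computations are otherwise routine; the only step requiring genuine care is the case analysis of the subdifferential at $x = 0$, since it is precisely the choice of subgradient in the unit ball that produces the hard cutoff $\| \mathbf y_i \|_2 \le \mu \mapsto 0$ and thus the thresholding behavior.
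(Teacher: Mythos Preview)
Your proposal is correct and follows essentially the same approach as the paper: both derive the row-wise soft-thresholding formula from the first-order optimality condition for the convex prox problem and then argue that $\matop S_{1,2}^\mu(Y)=DY$ for a diagonal $D$, so the unconstrained minimizer already lies in $\mathcal M_k$. Your version is in fact slightly more thorough, since you explicitly verify the case $x=0$ via the subdifferential $\partial\|\cdot\|_2(0)=\{v:\|v\|_2\le 1\}$, whereas the paper treats only the nonzero rows and leaves the complementary case implicit.
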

\begin{proof}
Since the function $g$ is convex in the ambient space, there exists exactly one solution 
\begin{equation*}
X^* = \prox_{\mu g} (Y).
\end{equation*}
The equivalent optimality condition is $0 \in \partial g(X^*) + \frac{1}{\mu}(X^* - Y)$. For each nonzero row $\mathbf x_\ell^* \neq 0$ of $X^*$ this means
\[
0 = \frac{\mathbf x_\ell^*}{\| \mathbf x_\ell^* \|} + \frac{1}{\mu} (\mathbf x_\ell^* - \mathbf y_i) = \left( 1 + \frac{\| \mathbf x_\ell^* \|}{\mu}\right) \frac{\mathbf x_\ell^*}{\| \mathbf x_\ell^* \|} -  \frac{1}{\mu}\mathbf y_i.
\]
Due to $\mu > 0$ this is only possible if $\| \mathbf y \| > \mu$, in which case we must have $\mathbf x_\ell^* = \frac{\| \mathbf y_i \| - \mu}{\| \mathbf y_i \|}$. This shows~\eqref{eq: ST operator}. For the second statement we first note that $\matop S_{1,2}(Y)$ acts on $Y$ by multiplication of a diagonal matrix, and therefore does not increase the rank. Since the argmin in~\eqref{eq: prox on manifold} is taken over a subset of $\R^{M \times N}$ we must have $\prox_{\mu g} (Y) = \prox^{\mathcal M_k}_{\mu g} (Y)$ if $Y \in \mathcal M_k$.
\end{proof} 

By analogy to proximal gradient methods we combine the prox operation on $\mathcal M_k$ with a Riemannian gradient descent for minimizing $f$ on $\mathcal M_k$. Using again the inherited Euclidean metric on $\mathcal M_k$ and $\matop T_k$ as a retraction, this results in the following iteration
\begin{align*}
X_{\ell+1} = \prox_{\mu g}^{\mathcal M_k} \Bigl( \mathbf T_k \bigl(X_\ell - \alpha_\ell \matop P_\ell(\nabla f(X_\ell))\bigr) \Bigr) = (\matop S_{1,2}^\mu \circ \matop T_k)(X_\ell - \alpha_\ell \matop{P}_\ell \matop{A}^*(\matop{A}(X_\ell) - y))),
\end{align*}
which can be regarded as a Riemannian version of proximal gradient descent. Note that this formulation differs from other possible generalizations of proximal gradient methods on manifolds~\cite{Chen2020,Huang2020} which are based on minimization of quadratic models on the tangent spaces for finding  an appropriate search direction. In our formulation above, while only applicable in this specific setup, the closed form solution of the prox operator on the manifold is available, which makes it a very intuitive alteration of the original algorithm. The full scheme is displayed in Algorithm~\ref{alg:RiemProxGrad}.

\begin{algorithm}
\SetKwInOut{Input}{Input}\SetKwInOut{Output}{output}
\Input{Linear operator $\matop{A}$, measurements $y$, $s_0 \in \mathbb N$, $\mu \in \R^+$, $\tau \in (0,1)$,\\ starting point $X_1= (\matop T_k \circ \matop H_{s_0}) (\alpha_0 \matop{A}^* y) \in \mathcal{M}_{k}$ with initial step size $\alpha_0 \in \R$}
\For{$\ell = 1,2,\dots$}
{
Choose stepsize $\alpha_\ell > 0$\;
Compute $\mu = \tau^\ell \max_{k}(\| \matop T_k(X_\ell - \alpha_\ell \matop{P}_\ell \matop{A}^*(\matop{A}(X_\ell) - y))_i \| : i = 1,\ldots,M )$\;
Compute
\[
X_{\ell + 1} = (\matop S_{1,2}^\mu \circ \matop T_k)(X_\ell - \alpha_\ell \matop{P}_\ell \matop{A}^*(\matop{A}(X_\ell) - y));
\]
}
\caption{A Riemannian proximal gradient method.}\label{alg:RiemProxGrad}
\end{algorithm}

In the proposed algorithm the thresholding parameter $\mu$ is reduced by a factor $\tau$ in each iteration. Different values of $\tau$ can be used depending on the problem. Other heuristics for selecting $\mu$ are possible as well. We comment on our implementation of the algorithm in the experiment section.

Again, we suggest to initialize the algorithm with $(\matop T_k \circ \matop H_{s_0})(\alpha_0 \matop A^* y)$, where $s_0$ is a guess for the a priori unknown sparsity $s$ of the solution and $\alpha_0$ is the Armijo step size. As we have already emphasized above, the choice of the starting point has proven to be an important step and the success of the Riemannian proximal gradient method will be somewhat limited by the missing knowledge of $s$. In the experiments, we picked $s_0 = \min(M,(m + k(k - N))/k)$, which is the maximal row sparsity that can in theory be detected with a given number of measurements $m$ from the degrees of freedom, but there was no significant improvement compared to a non-sparse starting point.

For the soft thresholding parameter $\mu$, we propose the rule
$\mu =\tau^\ell \max_{k}(\| \matop T_k(X_\ell - \alpha_\ell \matop{P}_\ell \matop{A}^*(\matop{A}(X_\ell) - y))_i \| : i = 1,\ldots,M )$, i.e., the norm of the $k$-th largest row of the current iterate. For sufficiently large $\tau$, this will set all but $k$ of the rows of $X_2$ to zero but ensures that the one with the largest norms remain active.

For determining the step sizes $\alpha_\ell$, we again propose to use a line search method based on gradient step only, that is,
\[
f(X_\ell) - f\bigl(\matop T_k(X_\ell - \beta^p \matop P_\ell( \matop{A}^*(\matop{A}(X_\ell) - y)))\bigr) \geq \gamma \, \beta^p \, \| \matop P_\ell( \matop{A}^*(\matop{A}(X_\ell) - y))\|_F^2.
\]

\begin{remark}
We remark that Algorithm~\ref{alg:RiemProxGrad} can be formally derived from the Riemannian IHT method in Algorithm~\ref{alg: semi-riem IHT} by switching to the quasi-optimal projection ${\hat{\matop P}}_{k,s} = \matop H_s  \circ \matop T_k$ (cf.~Remark~\ref{rem:order}), and then replacing the hard thresholding operator $\matop H_s$ with the operator $\matop S^\mu_{1,2}$. For soft thresholding this order of first truncating the rank before selecting the rows indeed is convenient due to Proposition~\ref{thm:softthresh}. The potential alternative of applying first soft thresholding and then rank truncation caused inconsistent behavior in the Armijo line search in our experiments. 
\end{remark}

\section{Numerical experiments}\label{sec: numerical experiments}

In this section we present some results of numerical experiments with the proposed algorithms. In the first set of results, we consider recovery of synthetic data using Gaussian measurements. In the second, we use random rank one measurements and also test the algorithms for a blind deconvolution problem.

\subsection{Recovery with Gaussian measurements}\label{sec: numerical experiments generic}

For the recovery problem~\eqref{eq: problem} we compare the success rates of Algorithms~\ref{alg: IHT quasi} and \ref{alg: semi-riem IHT} for different row sparsity levels $s$ and different column sizes $N$ when using random Gaussian measurements. Specifically, we generate a random matrix $X^* \in \mathcal M_{k,s}$ and take $m$ measurements $y_p = \langle A_p, X^* \rangle_F$ with normally distributed $A_p \sim\mathcal N(0,\frac{1}{\sqrt{m}})$. We fix the row dimension $M = 1000$ and the rank $k = 3$.

Figure~\ref{fig:loglog} shows a phase transition plot for different numbers of measurements ($m = \mathrm{round}(1.2^j), j = 18,\ldots,36$) on the y-axis and different row-sparsity ($s = \mathrm{round}(1.2^j), j = 6,\ldots,15$) on the x-axis. The values for $m$ and $s$ were empirically chosen because they yielded the most expressive results. The grayscale denotes the success rate for parameter setting, where white means no success and black means $100\%$ success. Both algorithms were tested with a fixed stepsize $\alpha_\ell = 1$ (on the left) and with an adaptive stepsize using an Armijo linesearch (on the right). The column size is always taken to be equal to the sparsity, that is, $N = s$, and we performed each experiment $10$ times. The initial points were taken as $X_1 = (\matop T_k \circ \matop H_s)(\matop A^* y)$, which we found to be crucial for the overall performance.

\begin{figure}[t]
\includegraphics[width=\textwidth]{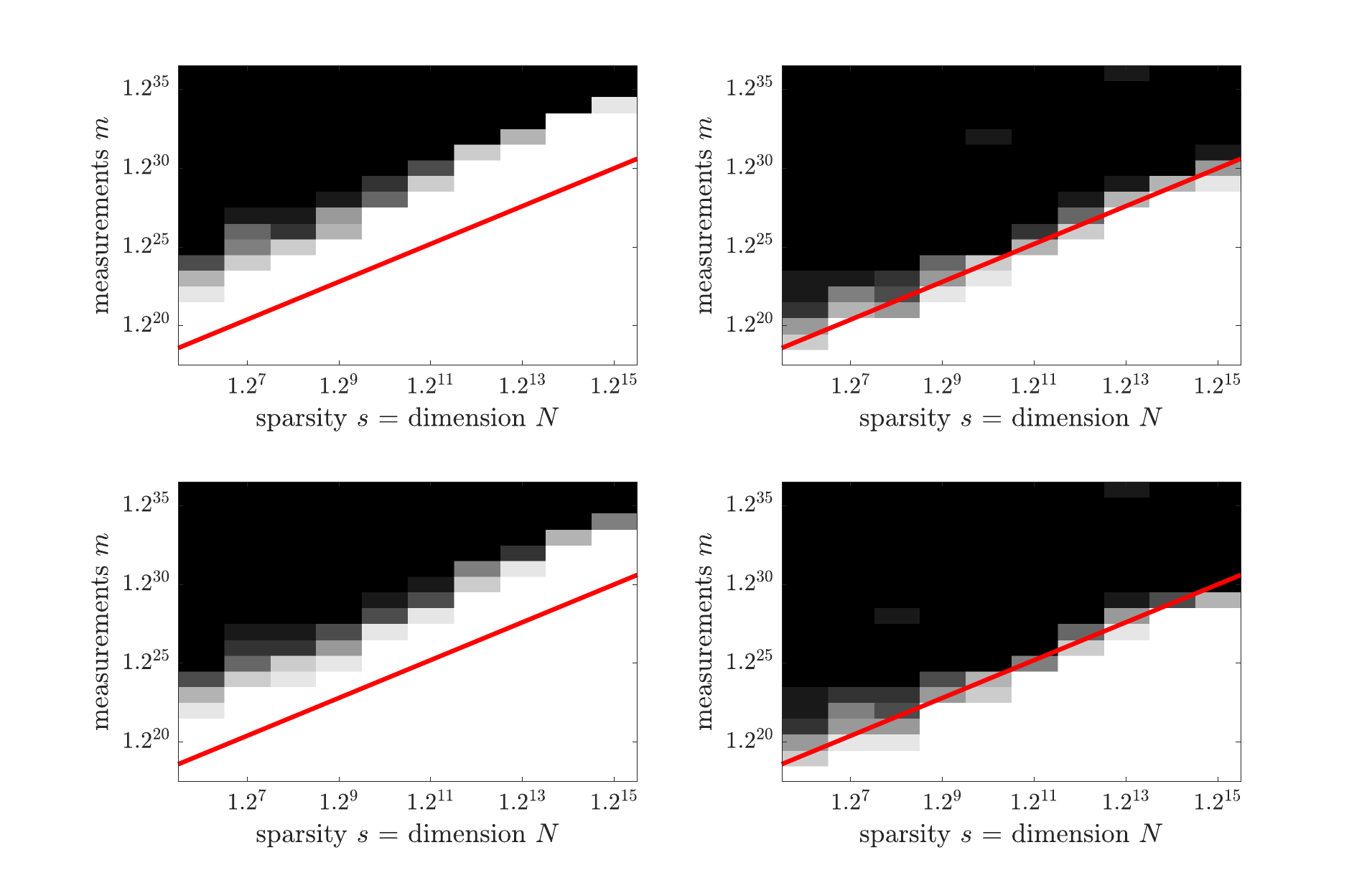}
\caption{Success rate of IHT (upper left), adaptive IHT (upper right), Riemannian IHT (lower left) and Riemannian adaptive IHT (lower right) out of 10 tries on a loglog scale. The row-sparsity $s$ is on the $x$-axis and equals the column size $N$, the number of measurements $m$ is on the $y$-axis. The rank is $k = 3$ and row size $M = 1000$. The red line has slope one, indicating linear dependence on $s$ and $N$.}
\label{fig:loglog}
\end{figure}

We can see that the algorithms with adaptive stepsize are in general more often successful. Note that the plots are provided on a loglog scale. Therefore, since we have set $N= s$, a line with slope one (depicted in red) indicates linear dependence on $s+N$ (as opposed to, e.g., linear dependence on $sN$, which would have slope two). The right plots therefore indeed suggest such a linear dependence $m = O(s+N)$ for successful recovery when adaptive stepsizes are used. Recall that for a fixed rank $k$ such a scaling is optimal. In the left plots with fixed stepsize it is a bit more difficult to recognize the slope of the transition line, which could be slightly larger than one. 

\begin{figure}[t]
\includegraphics[width=\textwidth]{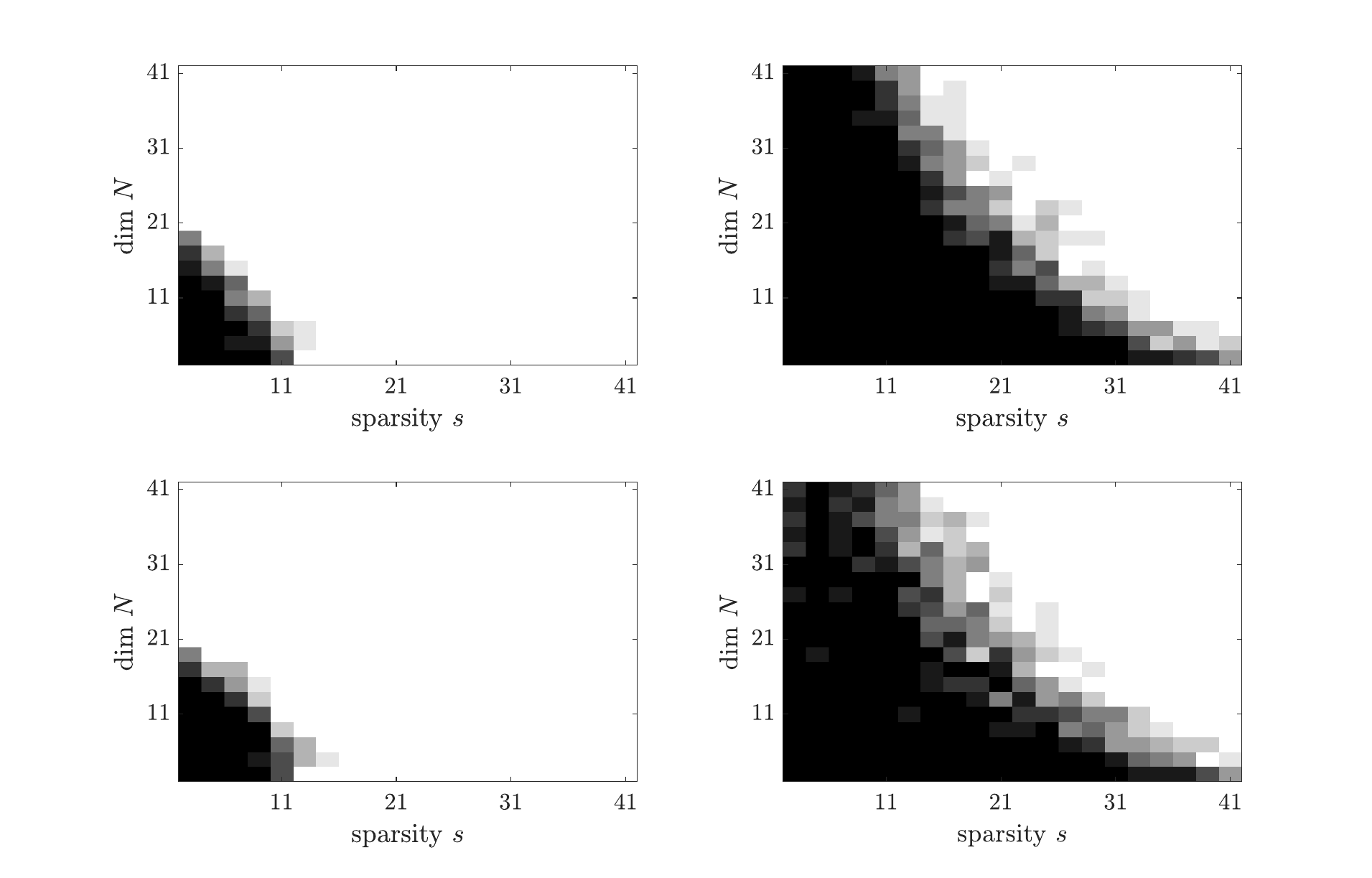}
\caption{Success rate of IHT (upper left), adaptive IHT (upper right), Riemannian IHT (lower left) and Riemannian adaptive IHT (lower right) out of 10 tries. The row-sparsity $s$ is on the $x$-axis, the column size $N$ is on the $y$-axis. The number of measurements is $m = 300$, the rank is $k = 3$ and the row size is $M = 1000$.}
\label{fig:sN}
\end{figure}

In a second experiment we fixed the number of Gaussian measurements $m = 300$ and varied the row-sparsity $s$ and the column size $N$ independently. The row size was again $M = 1000$ and rank $k = 3$. The results are given in Figure~\ref{fig:sN}. Note that the axes have a linear scale in this experiment. The algorithms with adaptive stepsize perform clearly better. The precise relation between $s$ and $N$ for the transition curve is, however, difficult to assess from these plots.

\begin{figure}[t]
\includegraphics[width=\textwidth]{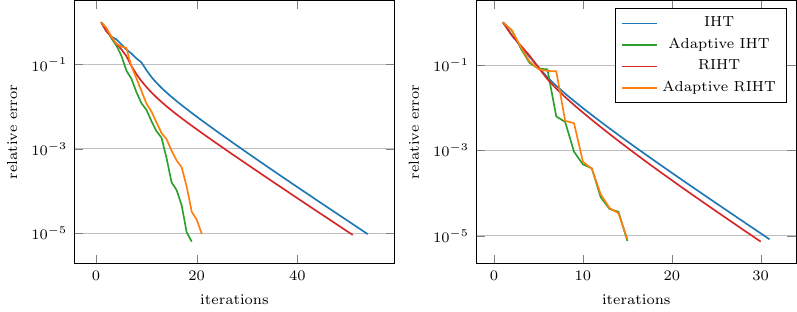}
\caption{Number of iterations against relative error for the three methods. The number of measurements is $m = 520$ (left) and $m = 800$ (right), the sparsity $s = 20$ and the dimension $N = 10$. The rank is $k = 3$ and $M = 1000$.}
\label{fig:convergence}
\end{figure}

We can also compare the convergence speed of each algorithm in terms of iteration numbers. Figure~\ref{fig:convergence} shows the relative errors to the exact solution for two different parameter settings, one that was borderline in the previous experiments (on the left) and another one for which all algorithms find the solution with ease (on the right). The observed behaviour, however, was actually almost the same for other cases. We can see that the methods with adaptive stepsize converge faster, perhaps even superlinearly, although they are of course more costly. For fixed stepsize, the Riemannian method outperforms its classical counterpart but the rate of convergence is the same.

\begin{figure}[t]
\centering
\includegraphics[width=.495\textwidth,trim=0 -35 0 0]{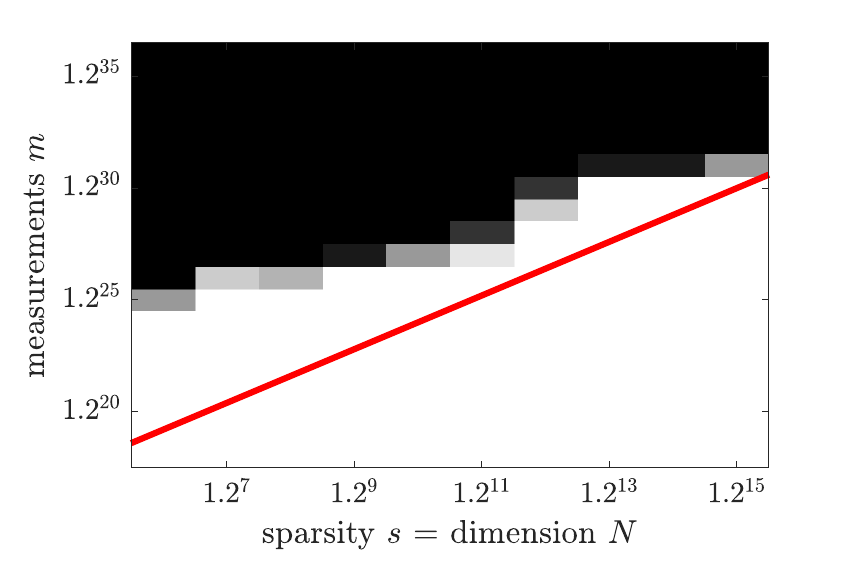}
\includegraphics[width=.495\textwidth]{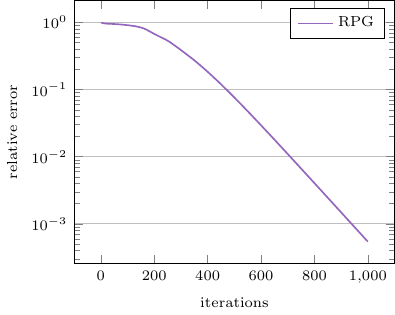}
\caption{Left: Success rate of the Riemannian proximal gradient method out of 10 tries on a loglog scale. The sparsity $s$ is equal to the matrix dimension $N$ on the $x$-axis, the number of measurements $m$ is on the $y$-axis. The rank is $k = 3$ and $M = 1000$. The red line has again slope $1$ and corresponds to linear dependence on $s$ and $N$.
Right: Number of iterations against relative error. The number of measurements is $m = 520$, the sparsity $s = 20$ and the dimension $N = 10$. The rank is $k = 3$ and $M = 1000$.}
\label{fig:loglog_prox}
\end{figure}

Finally, we present a proof of concept for the Riemannian proximal gradient method in Algorithm~\ref{alg:RiemProxGrad},~see Figure~\ref{fig:loglog_prox}. Specifically, we tested this method in the same setting as the first experiment. The factor $\tau$ that decreases the thresholding parameter $\mu$ in each step was set to $\tau = 0.99$, that is, $\mu$ is decreased by 1\% per step. We used adaptive stepsizes with linesearch and the initial guess $X_1 = (\matop T_k \circ \matop H_{s_0})(\alpha_0 \matop A^* y)$, as discussed in section~\ref{sec: Riemannian prox-gradient method}. As can be seen, the success rate of this method is lower than for the previous algorithm, which is natural since the sparsity parameter $s$ is unknown here. The deviation from the red line with slope 1 could be due to the effect of the different initialization, which is more prominent for small sparsity. Yet, for larger $s$, the dependence of the required measurements $m$ on $s$ and $N$ seems linear and hence optimal as well. We also repeated the borderline case from Figure~\ref{fig:convergence} and observe slow but linear convergence.

\subsection{Rank-one measurements and blind deconvolution}\label{sec: blind deconvolution}

We now examine the case of rank-one measurements, where
\[
 \langle A_p, X \rangle_F = \langle a_p b_p^\top, X \rangle_F = a_p^\top X b_p, \quad p=1,\dots,m.
\]
As discussed in section~\ref{sec: complexity}, using rank-one measurements enables a more efficient evaluation of gradients and tangent space projections. In particular, when implemented accordingly we expect the Riemannian version of IHT to be faster than the standard version.

We consider two experiments with rank-one measurements. In the first we take random rank-one measurements on synthetic data. Specifically we choose $a_p \sim\mathcal N(0,1)$ and $b_p \sim\mathcal N(0,\frac{1}{\sqrt{m}})$ to closely match the setting of random gaussian measurements (that is, with the correct scaling).

In the second experiment we use deterministic rank-one measurements based on the discrete Fourier transform. This setting can be motivated from applications in blind deconvolution. Consider the convolution 
\[
w * z=\left(\sum_{\ell=1}^m w_\ell z_{j-\ell}\right)_{j=1}^m,
\] 
of two real vectors of length $m$ where the indices are to be considered modulo $m$. The inverse operation, where both $w$ and $z$ are reconstructed from their convolution $w*z$, is called \emph{blind deconvolution}. In general, this is of course an ill-posed problem. A common assumption that renders a recovery possible is that $w$ and $z$ lie in some known subspaces, that is, $w= Bu$ and $z=Cv$ for some $B \in \R^{m \times M}$ and $C \in \R^{m \times N}$. As suggested in~\cite{Ahmed2014}, one can then recast the problem as a linear recovery task for a rank-one matrix. More precisely, one can diagonalize the action of $*$ using the (unitary) discrete Fourier transform~$F = [\frac{1}{\sqrt{m}}\exp(- \frac{i 2 \pi (k-1)(j-1)}{ m})]_{jk}$ which yields
\[
y = F(w*z)=\sqrt{m} \diag(Fw)F(z)=\sqrt{m} \diag(FBu)FCv=\matop A(uv^\top).
\]
Here, the last equality implicitly defines the linear operator $\matop A\colon \R^{M\times N}\to \R^m$. This is possible since every bilinear map in $(u,v)$ can be lifted to a linear map acting on $uv^\top$. In certain applications, the vector $u$ can also be assumed to be sparse. We therefore obtain an instance of our problem \eqref{eq: problem} with $k=1$. For further references, see e.g.~\cite{Jung2018}.

To see that the operator $\matop A$ defined in this way performs rank-one measurements one verifies that
\[
\langle A_p , uv^\top \rangle_F = \sqrt{m} \langle (FB)^{\mathsf H}_{p,:} \overline{(FC)^{}_{p,:}} , uv^\top \rangle_F,
\]
$(FB)_{p,:}$ and $(FC)_{p:}$ denote the $p$-th row of $FB$ and $FC$ respectively. Indeed, after a suitable reshape, the operator $\matop A$ effectively becomes the (row-wise) Khatri-Rao product of $FB$ and $FC$, that is,
\begin{equation*}
A_{p,[i,j]} = \sqrt{m} (FB)_{p,i} (FC)_{p,j}
\end{equation*}
This representation allows us to show that $\matop A^* \matop A$ is a real operator, since
\begin{align*}
(A^{\mathsf H} A)_{[i_1,j_1],[i_2,j_2]} &= \sum_{p = 1}^m \overline{A_{p,[i_1,j_1]}} A_{p,[i_2,j_2]} \\
&= m \sum_{k_1=1}^m \sum_{\ell_1=1}^m \sum_{k_2=1}^m \sum_{\ell_1=1}^m \overline{B_{k_1,i_1}} \overline{C_{\ell_1,j_1}} B_{k_2,i_2} C_{\ell_2,j_2}  \cdot  \sum_{p=1}^m \overline{F_{p,k_1}} \overline{F_{p,\ell_1}} F_{p,k_2} F_{p,\ell_2}. 
\end{align*}
Since $B$ and $C$ are real matrices, it suffices to show that the last sum is real. But this holds since for $p,k,\ell \in \{1,\ldots,m\}$, by elementary manipulations,
\begin{align}
F_{p,k} F_{p,\ell} = \frac{1}{\sqrt m} F_{p,\left((k+\ell-2)\!\!\!\!\!\mod m \right) + 1}
\end{align}
and the rows and columns of $F$ are unitary. Therefore, while $y = \matop A(uv^\top)$ is a complex vector, the problem itself as well as all steps in the algorithm remain real. For the efficient implementation of the action of $\matop A$ and $\matop A^*$ as in~\eqref{eq: efficient implementation}, however, some obvious modifications are required.

\begin{figure}[t]
\centering
\includegraphics[width=\textwidth]{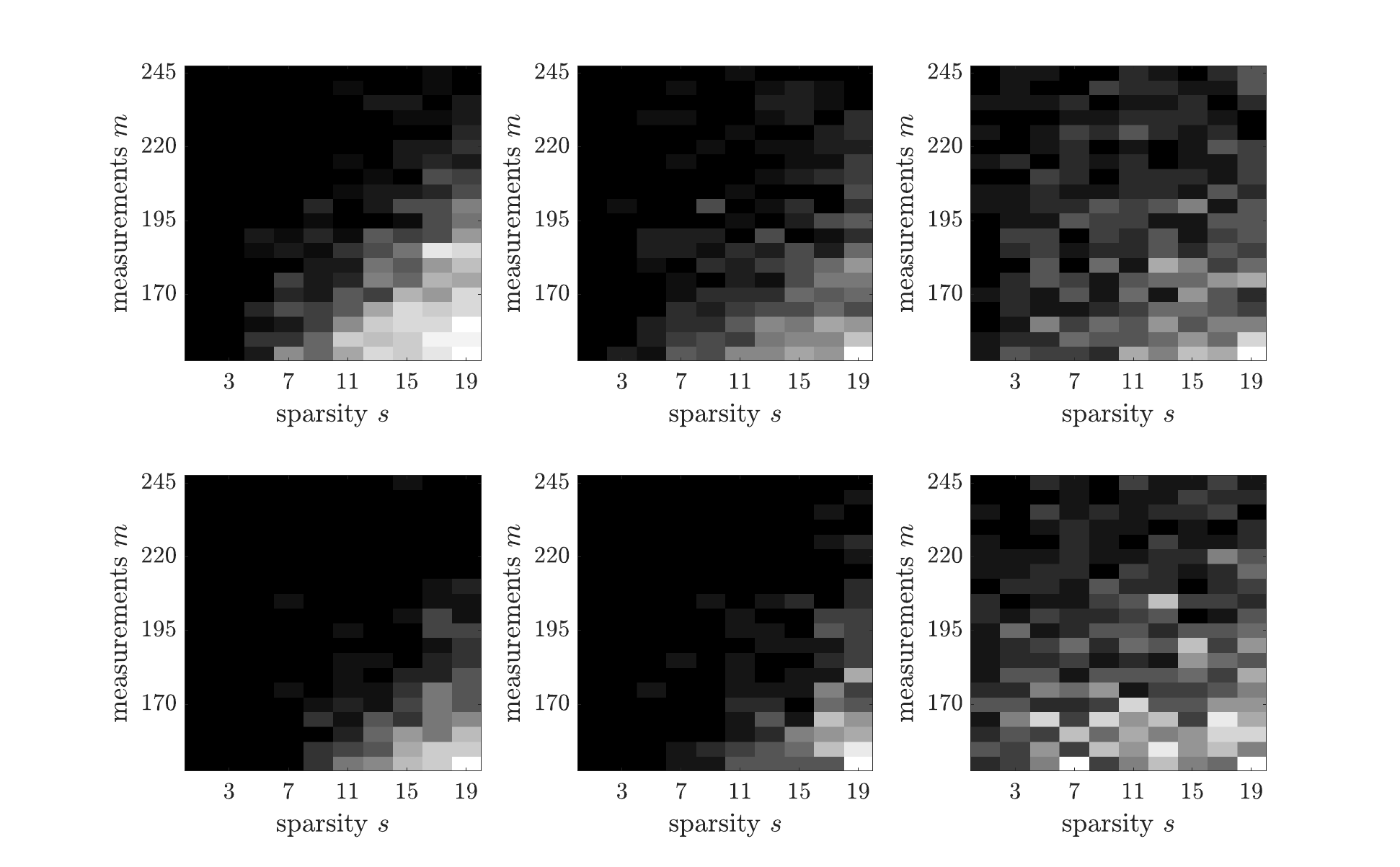}
\caption{Success rate of adaptive IHT (left), Riemannian adaptive IHT (middle) and Riemannian proximal gradient method (right) out of 20 tries using for random rank-one measurements (top) and the discrete Fourier transform (bottom). The matrix dimensions are $M = 150$ and $N = 50$. The sparsity $s$ is on the $x$-axis, the number of measurements $m$ is on the $y$-axis. The rank is $k = 1$.}
\label{fig:pt-robd}
\end{figure}

In both experiments we set $M = 150$ and $N = 50$. An exact solution $X^* = u v^\top$ of rank $k=1$ is generated by picking a random matrix $\hat X \sim\mathcal N(0,1)$ of size $s \times N$, computing its best rank-one approximation, and randomly distributing the resulting rows in a matrix of size $M \times N$. We then run the three algorithm with input $y = \matop A(X^*)$ for different values $m$ of rank-one measurements. For the Riemannian proximal gradient method, we chose the decrease of the thresholding to be $\tau = 0.999$, that is, a 0.1\% decrease in each step. 

In Figure~\ref{fig:pt-robd}, we show the phase transition plot for the two settings and the algorithms with adaptive stepsize, which performed better in the general setting and for the Riemannian proximal gradient method with unknown sparsity $s$. Again, the grayscale denotes the success rate for the different parameters $m$ and $s$. We performed $20$ tries for each setting as this yielded a sharper outline of the success rate. In Table~\ref{tab:convergence_bd} we report computational times and iteration numbers for both settings and the three algorithms in terms of the relative error. Here, the number of measurements was $m = 200$ and the row sparsity $s = 3$. Note that for $m \sim M + N = 200$, one expects convergence even without the sparsity constraint, however, we have observed that this is true only up to a large constant. We implemented the three methods in a comparable fashion, exploiting the structure of the rank one measurements as discussed in section~\ref{sec: complexity}. The computing time was measured on an Intel Core i7-10510U with 16 GB memory.

We can see that the adaptive IHT and the adaptive Riemannian IHT perform well in these experiments, especially for the Fourier measurements. The Riemannian method can be slightly better in terms of recovery, and significantly faster than the adaptive standard IHT method ($\sim 40\%$ improvement for random measurements and $50\% - 60\%$ for Fourier measurements).

\begin{table}[t]
\centering
\setlength{\tabcolsep}{10pt}
\renewcommand{\arraystretch}{1.5}
\begin{tabu}{ c | c  c | c  c | c  c }
\toprule
 & \multicolumn{2}{c|}{\bf Adaptive IHT} & \multicolumn{2}{c|}{\bf Adaptive RIHT} & \multicolumn{2}{c}{\bf RPG} \\
$\varepsilon$ & Iterations & CPU time & Iterations & CPU time & Iterations & CPU time \\ 

\midrule
\multicolumn{7}{l}{\bf Random Rank One Measurements} \\

$10^{-1}$ & 196 & 0.4131s & 196 & 0.2586s & 6691 & 9.5539s \\
$10^{-3}$ & 831 & 1.7495s & 831 & 1.1103s & 11490 & 16.675s \\
$10^{-5}$ & 1583 & 3.3613s & 1582 & 2.0928s & 16095 & 23.931s \\

\multicolumn{7}{l}{\bf Fourier Measurements} \\

$10^{-1}$ & 8 & 0.1455s & 10 & 0.0691s & 4211 & 25.331s \\
$10^{-3}$ & 25 & 0.4592s & 30 & 0.2189s & 8941 & 58.069s \\
$10^{-5}$ & 45 & 0.8268s & 46 & 0.3392s & 13545 & 91.412s \\
\bottomrule
\end{tabu}
\caption{Relative error of adaptive IHT, Riemannian Adaptive IHT and Riemannian proximal gradient method against number of iterations and CPU time for the setting of random rank-one measurements and discrete Fourier measurements. The number of measurements is $m = 200$, the sparsity is $s = 3$ and the dimensions are $M = 150$, $N = 50$. The rank is $k = 1$.}
\label{tab:convergence_bd}
\end{table}

The Riemannian proximal gradient method is capable of detecting the row sparsity but it has a lower success rate, and is also quite slow. We have found that this is almost entirely due to the choice of the starting point that can be chosen without the knowledge of the sparsity parameter $s$. Therefore, this algorithm can clearly be improved upon with some extra work on the start point. In any case, the relatively good success rate makes this a promising approach for further research in cases where the sparsity is not known a priori.

\begin{small}

\bibliographystyle{plain}
\bibliography{references}

\begin{thebibliography}{10}

\bibitem{Absil2008}
P.-A. Absil, R.~Mahony, and R.~Sepulchre.
\newblock {\em Optimization algorithms on matrix manifolds}.
\newblock Princeton University Press, Princeton, NJ, 2008.

\bibitem{Absil2012}
P.-A. Absil and J.~Malick.
\newblock Projection-like retractions on matrix manifolds.
\newblock {\em SIAM J. Optim.}, 22(1):135--158, 2012.

\bibitem{Ahmed2014}
A.~Ahmed, B.~Recht, and J.~Romberg.
\newblock Blind deconvolution using convex programming.
\newblock {\em IEEE Trans. Inform. Theory}, 60(3):1711--1732, 2014.

\bibitem{BR16}
S.~Bahmani and J.~Romberg.
\newblock Near-optimal estimation of simultaneously sparse and low-rank
  matrices from nested linear measurements.
\newblock {\em Inf. Inference}, 5(3):331--351, 2016.

\bibitem{Beck2017}
A.~Beck.
\newblock {\em First-order methods in optimization}.
\newblock Society for Industrial and Applied Mathematics (SIAM), Philadelphia,
  PA; Mathematical Optimization Society, Philadelphia, PA, 2017.

\bibitem{CLLY20}
J.-F. Cai, J.~Li, X.~Lu, and J.~You.
\newblock Sparse signal recovery from phaseless measurements via hard
  thresholding pursuit.
\newblock {\em Appl. Comput. Harmon. Anal.}, 56:367--390, 2022.

\bibitem{candes2006compressed}
E.~J. Cand\`es, J.~Romberg, and T.~Tao.
\newblock Robust uncertainty principles: exact signal reconstruction from
  highly incomplete frequency information.
\newblock {\em IEEE Trans. Inform. Theory}, 52(2):489--509, 2006.

\bibitem{Chen2020}
S.~Chen, S.~Ma, A.~M.-C. So, and T.~Zhang.
\newblock Proximal gradient method for nonsmooth optimization over the
  {S}tiefel manifold.
\newblock {\em SIAM J. Optim.}, 30(1):210--239, 2020.

\bibitem{donoho2006compressed}
D.~L. Donoho.
\newblock Compressed sensing.
\newblock {\em IEEE Trans. Inform. Theory}, 52(4):1289--1306, 2006.

\bibitem{EM14}
Y.~C. Eldar and S.~Mendelson.
\newblock Phase retrieval: stability and recovery guarantees.
\newblock {\em Appl. Comput. Harmon. Anal.}, 36(3):473--494, 2014.

\bibitem{FMN21}
M.~Fornasier, J.~Maly, and V.~Naumova.
\newblock Robust recovery of low-rank matrices with non-orthogonal sparse
  decomposition from incomplete measurements.
\newblock {\em Appl. Math. Comput.}, 392:125702, 2021.

\bibitem{Foucart2020}
S.~Foucart, R.~Gribonval, L.~Jacques, and H.~Rauhut.
\newblock Jointly low-rank and bisparse recovery: questions and partial
  answers.
\newblock {\em Anal. Appl. (Singap.)}, 18(1):25--48, 2020.

\bibitem{FoucartBook}
S.~Foucart and H.~Rauhut.
\newblock {\em A mathematical introduction to compressive sensing}.
\newblock Birkh\"{a}user/Springer, New York, 2013.

\bibitem{GKS19}
J.~Geppert, F.~Krahmer, and D.~St\"{o}ger.
\newblock Sparse power factorization: balancing peakiness and sample
  complexity.
\newblock {\em Adv. Comput. Math.}, 45(3):1711--1728, 2019.

\bibitem{Haeffele2020}
B.~D. Haeffele and R.~Vidal.
\newblock Structured low-rank matrix factorization: Global optimality,
  algorithms, and applications.
\newblock {\em IEEE Trans. Pattern Anal. Mach. Intell.}, 42(6):1468--1482,
  2020.

\bibitem{Huang2020}
W.~Huang and K.~Wei.
\newblock Riemannian proximal gradient methods.
\newblock {\em Math. Program.}, 194(1-2, Ser. A):371--413, 2022.

\bibitem{IVW17}
M.~Iwen, A.~Viswanathan, and Y.~Wang.
\newblock Robust sparse phase retrieval made easy.
\newblock {\em Appl. Comput. Harmon. Anal.}, 42(1):135--142, 2017.

\bibitem{Jung2018}
P.~Jung, F.~Krahmer, and D.~St\"{o}ger.
\newblock Blind demixing and deconvolution at near-optimal rate.
\newblock {\em IEEE Trans. Inform. Theory}, 64(2):704--727, 2018.

\bibitem{kech2017optimal}
M.~Kech and F.~Krahmer.
\newblock Optimal injectivity conditions for bilinear inverse problems with
  applications to identifiability of deconvolution problems.
\newblock {\em SIAM J. Appl. Algebra Geom.}, 1(1):20--37, 2017.

\bibitem{Klietsch2019}
M.~Kliesch, S.~J. Szarek, and P.~Jung.
\newblock Simultaneous structures in convex signal recovery -- revisiting the
  convex combination of norms.
\newblock {\em Front. Appl. Math. Stat.}, 5, 2019.

\bibitem{KS19}
F.~Krahmer and D.~St\"oger.
\newblock On the convex geometry of blind deconvolution and matrix completion.
\newblock {\em Comm. Pure Appl. Math.}, 74(4):790--832, 2021.

\bibitem{Lee2018}
K.~Lee, Y.~Wu, and Y.~Bresler.
\newblock Near-optimal compressed sensing of a class of sparse low-rank
  matrices via sparse power factorization.
\newblock {\em IEEE Trans. Inform. Theory}, 64(3):1666--1698, 2018.

\bibitem{LLSW19}
X.~Li, S.~Ling, T.~Strohmer, and K.~Wei.
\newblock Rapid, robust, and reliable blind deconvolution via nonconvex
  optimization.
\newblock {\em Appl. Comput. Harmon. Anal.}, 47(3):893--934, 2019.

\bibitem{LLB16}
Y.~Li, K.~Lee, and Y.~Bresler.
\newblock Identifiability in blind deconvolution with subspace or sparsity
  constraints.
\newblock {\em IEEE Trans. Inform. Theory}, 62(7):4266--4275, 2016.

\bibitem{ling2017blind}
S.~Ling and T.~Strohmer.
\newblock Blind deconvolution meets blind demixing: algorithms and performance
  bounds.
\newblock {\em IEEE Trans. Inform. Theory}, 63(7):4497--4520, 2017.

\bibitem{ling2017regularized}
S.~Ling and T.~Strohmer.
\newblock Regularized gradient descent: a non-convex recipe for fast joint
  blind deconvolution and demixing.
\newblock {\em Inf. Inference}, 8(1):1--49, 2019.

\bibitem{Magdon-Ismail2017}
M.~Magdon-Ismail.
\newblock N{P}-hardness and inapproximability of sparse {PCA}.
\newblock {\em Inform. Process. Lett.}, 126:35--38, 2017.

\bibitem{Maly2021}
J.~Maly.
\newblock Robust sensing of low-rank matrices with non-orthogonal sparse
  decomposition.
\newblock arXiv:2103.05523, 2021.

\bibitem{Oymak2015}
S.~Oymak, A.~Jalali, M.~Fazel, Y.~C. Eldar, and B.~Hassibi.
\newblock Simultaneously structured models with application to sparse and
  low-rank matrices.
\newblock {\em IEEE Trans. Inform. Theory}, 61(5):2886--2908, 2015.

\bibitem{Boyd2013}
N.~Parikh and S.~Boyd.
\newblock Proximal algorithms.
\newblock {\em Foundations and Trends in Optimization}, 1(3):127--239, 2014.

\bibitem{Schneider2016}
R.~Schneider and A.~Uschmajew.
\newblock Convergence results for projected line-search methods on varieties of
  low-rank matrices via {{\L{}}}ojasiewicz inequality.
\newblock {\em SIAM J. Optim.}, 25(1):622--646, 2015.

\bibitem{S19}
M.~Soltanolkotabi.
\newblock Structured signal recovery from quadratic measurements: breaking
  sample complexity barriers via nonconvex optimization.
\newblock {\em IEEE Trans. Inform. Theory}, 65(4):2374--2400, 2019.

\bibitem{bookch2020}
A.~Uschmajew and B.~Vandereycken.
\newblock {Geometric methods on low-rank matrix and tensor manifolds}.
\newblock In P.~Grohs, M.~Holler, and A.~Weinmann, editors, {\em Handbook of
  variational methods for nonlinear geometric data}, pages 261--313. Springer,
  Cham, 2020.

\bibitem{Vandereycken2013}
B.~Vandereycken.
\newblock Low-rank matrix completion by {R}iemannian optimization.
\newblock {\em SIAM J. Optim.}, 23(2):1214--1236, 2013.

\bibitem{Wei2016}
K.~Wei, J.-F. Cai, T.~F. Chan, and S.~Leung.
\newblock Guarantees of {R}iemannian optimization for low rank matrix recovery.
\newblock {\em SIAM J. Matrix Anal. Appl.}, 37(3):1198--1222, 2016.

\end{thebibliography}

\end{small}

\end{document}